\newtheorem{definition}{Definition}
\newtheorem{remark}{Remark}
\newtheorem{example}{Example}
\newtheorem{theorem}{Theorem}
\newtheorem{problem}{Problem}
\newtheorem{proposition}{Proposition}
\title{\LARGE \bf
On Lossless Approximations, the Fluctuation-Dissipation Theorem, \\ and Limitations of Measurements}
\author{Henrik Sandberg, Jean-Charles Delvenne, and John C. Doyle % <-this % stops a space
\thanks{H. Sandberg is with
the School of Electrical Engineering, Royal Institute of Technology (KTH), Stockholm, Sweden, {\tt\small hsan@ee.kth.se}.
Supported in part by the Swedish Research Council and the Swedish Foundation for Strategic Research.}
\thanks{J.-C. Delvenne is with the University of Namur (FUNDP),
Department of Mathematics, Namur, Belgium, {\tt\small  jean-charles.delvenne@math.fundp.ac.be}.
Supported in part by the Belgian Programme on Interuniversity Attraction Poles DYSCO, initiated
by the Belgian Federal Science Policy Office. The scientific responsibility rests with its authors.}
\thanks{J.C. Doyle is with California Institute of Technology, Control and Dynamical Systems, M/C 107-81,
Pasadena, CA 91125, USA, {\tt doyle@cds.caltech.edu}.
Supported  by grants NSF-EFRI-0735956, AFOSR-FA9550-08-1-0043, and ONR-MURI-N00014-08-1-0747.}
}
\begin{document}
\maketitle
\thispagestyle{empty}

\begin{abstract}
In this paper, we take a control-theoretic approach to answering
some standard questions in statistical mechanics, and use the
results to derive limitations of classical measurements. A central
problem is the relation between systems which appear
macroscopically dissipative but are microscopically lossless. We
show that a linear system is dissipative if, and only if, it can
be approximated by a linear lossless system over arbitrarily long
time intervals. Hence lossless systems are in this sense dense in
dissipative systems. A linear active system can be approximated by
a nonlinear lossless system that is charged with initial energy.
As a by-product, we obtain mechanisms explaining the Onsager
relations from time-reversible lossless approximations, and the
fluctuation-dissipation theorem from uncertainty in the initial
state of the lossless system. The results are applied to
measurement devices and are used to quantify limits on the
so-called observer effect, also called \emph{back action}, which
is the impact the measurement device has on the observed system.
In particular, it is shown that deterministic back action can be
compensated by using active elements, whereas stochastic back
action is unavoidable and depends on the temperature of the
measurement device.
\end{abstract}

\section{Introduction}
Analysis and derivation of limitations on what is achievable are
at the core of many branches of engineering, and thus of
tremendous importance. Examples can be found in estimation,
information, and control theories. In estimation theory, the
Cram\'er-Rao inequality gives a lower bound on the covariance of
the estimation error, in information theory Shannon showed that
the channel capacity gives an upper limit on the communication
rate, and in control theory Bode's sensitivity integral bounds
achievable control performance. For an overview of limitations in
control and estimation, see the book \cite{Seron+97}. Technology
from all of these branches of engineering is used in parallel in
modern networked control systems \cite{murray+03}. Much research
effort is currently spent on understanding how the limitations
from these fields interact. In particular, much effort has been
spent on merging limitations from control and information theory,
see for example \cite{nair+04,Tatikonda+04,martins+07}. This has
yielded insight about how future control systems should be
designed to maximize their performance and robustness.

Derivation of limitations is also at the core of physics.
Well-known examples are the laws of thermodynamics in classical
physics and the uncertainty principle in quantum mechanics
\cite{wannier,kittel+80,ma85}. The exact implications of these
physical limitations on the performance of control systems have
received little attention, even though all components of a control
system, such as actuators, sensors, and computers, are built from
physical components which are constrained by physical laws.
Control engineers discuss limitations in terms of location of
unstable plant poles and zeros, saturation limits of actuators,
and more recently channel capacity in feedback loops. But how does
the amount of available energy limit the possible bandwidth of a
control system? How does the ambient temperature affect the
estimation error of an observer? How well can you implement a
desired ideal behavior using physical components? The main goal of
this paper is to develop a theoretical framework where questions
such as these can be answered, and initially to derive limitations
on measurements using basic laws from classical physics. Quantum
mechanics is not used in this paper.

The derivation of physical limitations broaden our understanding
of control engineering, but these limitations are also potentially
useful outside of the traditional control-engineering community.
In the physics community, the rigorous error analysis we provide
could help in the analysis of far-from-equilibrium systems when
time, energy, and degrees of freedom are limited. For
Micro-Electro-Mechanical Systems (MEMS), the limitation we derive
on measurements can be of significant importance since the
physical scale of micro machines is so small. In systems biology,
limits on control performance due to molecular implementation have
been studied \cite{VinnicombePaulsson}. It is hoped that this
paper will be a first step in a unified theoretical foundation for
such problems.

\subsection{Related work}
The derivation of thermodynamics as a theory of large systems
which are microscopically governed by lossless and time-reversible
fundamental laws of physics (classical or quantum mechanics) has a
large literature and tremendous progress for over a century within
the field of statistical physics. See for instance
\cite{zwanzig73,ford+kac86,caldeira+83,Lebowitz99} for physicists'
account of how dissipation can appear from time-reversible
dynamics, and the books \cite{wannier,kittel+80,ma85} on
traditional statistical physics. In non-equilibrium statistical
mechanics, the focus has traditionally been on dynamical systems
close to equilibrium. A result of major importance is the
\emph{fluctuation-dissipation theorem}, which plays an important
role in this paper. The origin of this theorem goes back to
Nyquist's and Johnson's work \cite{johnson28,nyquist28} on thermal
noise in electrical circuits. In its full generality, the theorem
was first stated in \cite{cellen+51}; see also \cite{Kubo}. The
theorem shows that thermal fluctuations of systems close to
equilibrium determines how the system dissipates energy when
perturbed. The result can be used in two different ways: By
observing the fluctuation of a system you can determine its
dynamic response to perturbations; or by making small
perturbations to the system you can determine its noise
properties. The result has found wide-spread use in many areas
such as fluid mechanics, but also in the circuit community, see
for example \cite{twiss55,anderson82}.  A recent survey article
about the fluctuation-dissipation theorem is \cite{marconi+08}.
Obtaining general results for dynamical systems far away from
equilibrium (far-from-equilibrium statistical mechanics) has
proved much more difficult. In recent years, the so-called
\emph{fluctuation theorem} \cite{evans+02,lebowitz+99}, has
received a great deal of interest. The fluctuation theorem
quantifies the probability that a system far away from equilibrium
violates the second law of thermodynamics. Not surprisingly, for
longer time intervals, this probability is exceedingly small. A
surprising fact is that the fluctuation theorem implies the
fluctuation-dissipation theorem when applied to systems close to
equilibrium \cite{lebowitz+99}. The fluctuation theorem is not
treated in this paper, but is an interesting topic for future
work.

From a control theorist's perspective, it remains to understand
what these results imply in a control-theoretical setting. One
contribution of this paper is to highlight the importance of the
fluctuation-dissipation theorem in control engineering.
Furthermore, additional theory is needed that is both
mathematically more rigorous and applies to systems not merely
far-from-equilibrium, but maintained there using active control.
More quantitative convergence and error analysis is also needed
for systems not asymptotically large, such as arise in biology,
microelectronics, and micromechanical systems.

Substantial work has already been done in the control community in
formulating various results of classical thermodynamics in a more
mathematical framework. In \cite{Brockett+78,Brockett99}, the
second law of thermodynamics is derived and a control-theoretic
heat engine is obtained (in \cite{san07a} these results are
generalized). In \cite{Haddad+05}, a rigorous dynamical systems
approach is taken to derive the laws of thermodynamics using the
framework of dissipative systems \cite{willems72A,willems72B}. In
\cite{Mitter+05}, it is shown how the entropy flows in Kalman-Bucy
filters, and in \cite{san07d} Linear-Quadratic-Gaussian control
theory is used to construct heat engines. In
\cite{Bernstein+02,barahona+02,san07c}, the problem of how
lossless systems can appear dissipative (compare with
\cite{zwanzig73,ford+kac86,caldeira+83} above) is discussed using
various perspectives. In \cite{georgiou+08}, how the direction of
time affects the difficulty of controlling a process is discussed.

\subsection{Contribution of the paper}
The first contribution of the paper is that we characterize
systems that can be approximated using linear or nonlinear
lossless systems. We develop a simple, clear control-theoretic
model framework in which the only assumptions on the nature of the
physical systems are conservation of energy and causality, and all
systems are of finite dimension and act on finite time horizons.
We construct high-order lossless systems that approximate
dissipative systems in a systematic manner, and prove that a
linear model is dissipative if, and only if, it is arbitrarily
well approximated by lossless causal linear systems over an
arbitrary long time horizon. We show how the error between the
systems depend on the number of states in the approximation and
the length of the time horizon (Theorems~\ref{prop:errbound} and
\ref{thm:diss2}). Since human experience and technology is limited
in time, space, and resolution, there are limits to directly
distinguishing between a low-order macroscopic dissipative system
and a high-order lossless approximation. This result is important
since it shows exactly what macroscopic behaviors we can implement
with linear lossless systems, and how many states are needed. In
order to approximate an {\em active} system, even a linear one,
with a lossless system, we show that the approximation {\em must
be nonlinear}. Note that active components are at the heart of
biology and all modern technology, in amplification, digital
electronics, signal transduction, etc. In the paper, we construct
one class of low-order lossless nonlinear approximations and show
how the approximation error depends on the initial available
energy (Theorems~\ref{thm:nonlin_bound} and
\ref{thm:nonlin_bound2}). Thus in this control-theoretic context,
nonlinearity is not a source of complexity, but rather an
essential and valuable resource for engineering design. These
result are all of theoretical interest, but should also be of
practical interest. In particular, the results give constructive
methods for implementing desired dynamical systems using finite
number of lossless components when resources such as time and
energy are limited.

As a by-product of this contribution, the fluctuation-dissipation
theorem (Propositions~\ref{prop:fluctdiss} and
\ref{prop:johnsonnoise}) and the Onsager reciprocal relations
(Theorem~\ref{thm:timerev}) easily follows. The lossless systems
studied here are consistent with classical physics since they
conserve energy. If time reversibility (see \cite{willems72B} and
also Definition~\ref{def:reversible}) of the linear lossless
approximation is assumed, the Onsager relations follow.
Uncertainty in the initial state of linear lossless approximations
give a simple explanation for noise that can be observed at a
macroscopic level, as quantified by the fluctuation-dissipation
theorem. The fluctuation-dissipation theorem and the Onsager
relations are well know and have been shown in many different
settings. Our contribution here is to give alternative
explanations that use the language and tools familiar to control
theorists.

The second contribution of the paper is that we highlight the
importance of the fluctuation-dissipation theorem for deriving
limitations in control theory. As an application of
control-theoretic relevance, we apply it on models of measurement
devices. With idealized measurement devices that are not lossless,
we show that measurements can be done without perturbing the
measured system. We say these measurement devices have no
\emph{back action}, or alternatively, no \emph{observer effect}.
However, if these ideal measurement devices are implemented using
lossless approximations, simple limitations on the back action
that depends on the surrounding temperature and available energy
emerge. We argue that these lossless measurement devices and the
resulting limitations are better models of what we can actually
implement physically.

We hope this paper is a step towards building a framework for
understanding fundamental limitations in control and estimation
that arise due to the physical implementation of measurement
devices and, eventually, actuation. We defer many important and
difficult issues here such as how to actually model such devices
realistically. It is also clear that this framework would
benefit from a behavioral setting \cite{polderman+97}. However,
for the points we make with this paper, a conventional
input-output setting with only regular interconnections is
sufficient. Aficionados will easily see the generalizations, the
details of which might be an obstacle to readability for others.
Perhaps the most glaring unresolved issue is how to best
motivate the introduction of stochastics. In conventional
statistical mechanics, a stochastic framework is taken for
granted, whereas we ultimately aim to explain if, where, and why
stochastics arise naturally. We hope to address this in future
papers. The paper \cite{san07c} is an early version of this
paper.

\subsection{Organization}
The organization of the paper is as follows: In
Section~\ref{sec:micro}, we derive lossless approximations of
various classes of systems. First we look at memoryless
dissipative systems, then at dissipative systems with memory, and
finally at active systems. In Section~\ref{sec:fluct}, we look at
the influence of the initial state of the lossless approximations,
and derive the fluctuation-dissipation theorem. In
Section~\ref{sec:hardlimit}, we apply the results to measurement
devices, and obtain limits on their performance.

\subsection{Notation}
Most notation used in the paper is standard. Let
$f(t)\in\mathbb{R}^{n \times n}$ and $f_{ij}(t)$ be the $(i,j)$-th
element. Then $f(t)^T$ denotes the transpose of $f(t)$, and
$f(t)^*$ the complex conjugate transpose of $f(t)$. We define
$\|f(t)\|_1 := \sum_{i,j=1}^n |f_{ij}(t)|$, $\|f(t)\|_2 :=
\sqrt{\sum_{i,j=1}^n |f_{ij}(t)|^2}$, and $\bar \sigma (f(t))$ is
the largest singular value of $f(t)$. Furthermore,
$\|f\|_{L_1[0,t]}:= \int_0^t \|f(s)\|_1 ds$, and
$\|f\|_{L_2[0,t]}:= \sqrt{\int_0^t \|f(s)\|_2^2 ds}$. $I_n$ is the
$n$-dimensional identity matrix.

\section{Lossless Approximations}
\label{sec:micro}

\subsection{Lossless systems}
In this paper, linear systems in the form
\begin{equation}
\begin{aligned}
  \dot x(t) & = J x(t) + Bu(t), & x(t) & \in \mathbb{R}^n, \\
  y(t) & = B^T x(t) + Du(t), & u(t),y(t)& \in \mathbb{R}^p,
\end{aligned}
\label{eq:physlinear}
\end{equation}
where $J$ and $D$ are anti symmetric ($J=-J^T$, $D=-D^T$) and
$(J,B)$ is controllable are of special interest. The system
(\ref{eq:physlinear}) is a \emph{linear lossless system}. We
define the \emph{total energy} $E(x)$ of (\ref{eq:physlinear}) as
\begin{equation}
  E(x) := \frac{1}{2} x^Tx.
\label{eq:internalenergy}
\end{equation}
\emph{Lossless} \cite{willems72A,willems72B} means that the total
energy of (\ref{eq:physlinear}) satisfies
\begin{equation}
  \frac{dE(x(t))}{dt} = x(t)^T\dot x(t)= y(t)^Tu(t) =: w(t),
\label{eq:powbalance}
\end{equation}
where  $w(t)$ is the \emph{work rate} on the system. If there is
no work done on the system, $w(t)=0$, then the total energy
$E(x(t))$ is constant. If there is work done on the system,
$w(t)>0$, the total energy increases. The work, however, can be
extracted again, $w(t)<0$, since the energy is conserved and the
system is controllable. In fact, all finite-dimensional linear
minimal lossless systems with supply rate $w(t)=y(t)^Tu(t)$ can be
written in the form (\ref{eq:physlinear}), see
\cite[Theorem~5]{willems72B}. Nonlinear lossless systems will also
be of interest later in the paper. They will also satisfy
(\ref{eq:internalenergy})--(\ref{eq:powbalance}), but their
dynamics are nonlinear. Conservation of energy is a common
assumption on microscopic models in statistical mechanics and in
physics in general \cite{wannier}. The systems
(\ref{eq:physlinear}) are also time reversible if, and only if,
they are also reciprocal, see~\cite[Theorem~8]{willems72B} and
also Definitions~\ref{def:reciprocal}--\ref{def:reversible} in
Section~\ref{sec:memory}. Hence, we argue the systems
(\ref{eq:physlinear}) have desirable ``physical'' properties.
\begin{remark}
In this paper, we only consider systems that are lossless and
dissipative with respect to the supply rate $w(t)=y(t)^Tu(t)$.
This supply rate is of special importance because of its relation
to passivity theory. Indeed, there is a theory for systems with
more general supply rates, see for example
\cite{willems72A,willems72B}, and it is an interesting problem to
generalize the results here to more general supply rates.
\end{remark}
\begin{remark}
The system (\ref{eq:physlinear}) is a linear port-Hamiltonian
system, see for example \cite{Cervera+07}, with no dissipation.
Note that the Hamiltonian of a linear port-Hamiltonian system is
identical to the total energy $E$.
\end{remark}

There are well-known necessary and sufficient conditions for when
a transfer function can be exactly realized using linear lossless
systems: All the poles of the transfer function must be simple,
located on the imaginary axis, and with positive semidefinite
residues, see \cite{willems72B}. In this paper, we show that
linear dissipative systems can be arbitrarily well approximated by
linear lossless systems (\ref{eq:physlinear}) over arbitrarily
large time intervals. Indeed, if we believe that energy is
conserved, then all macroscopic models should be realizable using
lossless systems of possibly large dimension. The linear lossless
systems are rather abstract but have properties that we argue are
reasonable from a physical point of view, as illustrated by the
following example.

\begin{figure}[t]
  \centering
  \psfrag{i}[][][1.0]{$i$}
  \psfrag{v1,c1}[][][1.0]{$v_1,C_1$}
  \psfrag{v2,c2}[][][1.0]{$v_2,C_2$}
  \psfrag{i1,L1}[][][1.0]{$i_1,L_1$}
  \psfrag{+}[][][1.0]{$+$}
  \psfrag{-}[][][1.0]{$-$}
  \includegraphics[width=0.6\hsize]{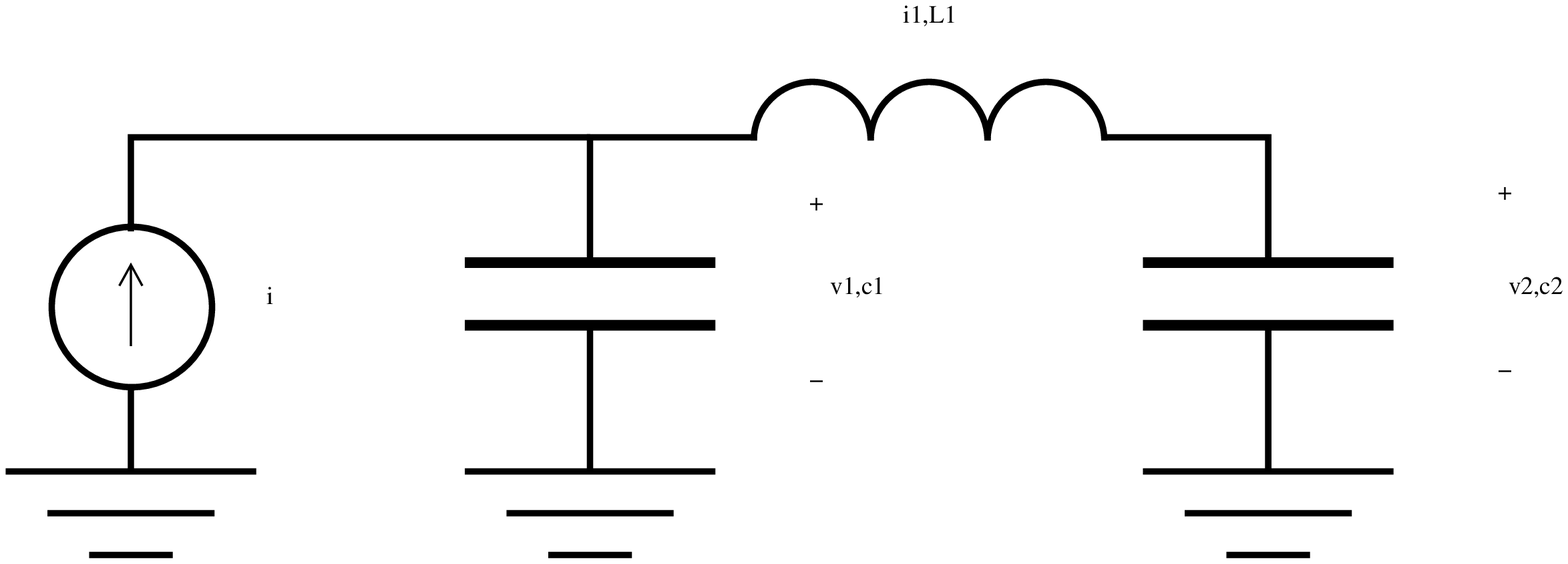}
  \caption{The inductor-capacitor circuit in Example~\ref{ex:1}.}
  \label{fig:LCcircuit}
\end{figure}

\begin{example}
\label{ex:1} It is a simple exercise to show that the circuit in
Fig.~\ref{fig:LCcircuit} with the current $i(t)$ through the
current source as input $u(t)$, and the voltage $v_1(t)$ across
the current source as output $y(t)$ is a lossless linear system.
We have {\small
\begin{align*}
\dot x(t) &=
\begin{pmatrix}
0 & -1/\sqrt{L_1 C_1} & 0 \\
1/\sqrt{L_1 C_1}  & 0 &  -1/\sqrt{L_1 C_2} \\
0 & 1/\sqrt{ L_1 C_2} & 0
\end{pmatrix}
x(t) \\ & \quad +
\begin{pmatrix}
1/\sqrt C_1 \\ 0 \\ 0
\end{pmatrix}
u(t), \\
y(t) & =
\begin{pmatrix} 1/\sqrt C_1 & 0 & 0 \end{pmatrix}
x(t), \\ x(t)^T & = \begin{pmatrix} \sqrt C_1 v_1(t) & \sqrt L_1 i_1(t) & \sqrt C_2 v_2(t) \end{pmatrix}, \\
E(x(t)) &= \frac{1}{2}x(t)^Tx(t) = \frac{1}{2}(C_1v_1(t)^2 + L_1 i_1(t)^2 + C_2 v_2(t)^2), \\ w(t) & = y(t)u(t) = v_1(t)i(t).
\end{align*}}
Note that $E(x(t))$ coincides with the energy stored in the
circuit, and that $w(t)$ is the power into the circuit. Electrical
circuits with only lossless components (capacitors and inductors)
can be realized in the form (\ref{eq:physlinear}), see
\cite{AndersonVong}. Circuits with resistors can always be
\emph{approximated} by systems in the form (\ref{eq:physlinear}),
as is shown in this paper.
\end{example}

\subsection{Lossless approximation of dissipative memoryless systems}
\label{sec:staticio} Many times macroscopic systems, such as
resistors, are modeled by simple static (or memoryless)
input-output relations
\begin{equation}
  y(t) = k u(t),
\label{eq:diss}
\end{equation}
where $k\in\mathbb{R}^{p \times p}$. If $k$ is positive
semidefinite, this system is dissipative since work can never be
extracted and the work rate is always nonnegative, $w(t) =
y(t)^Tu(t)= u(t)^Tku(t)\geq 0$, for all $t$ and $u(t)$. Hence,
(\ref{eq:diss}) is not lossless. Next, we show how we can
approximate (\ref{eq:diss}) arbitrarily well with a lossless
linear system (\ref{eq:physlinear}) over \emph{finite}, but
arbitrarily long, time horizons $[0,\tau]$. First of all, note
that $k$ can be decomposed into $k=k_s + k_a$ where $k_s$ is
symmetric positive semidefinite, and $k_a$ is anti symmetric. We
can use $D=k_a$ in the lossless approximation
(\ref{eq:physlinear}) and need only to consider the symmetric
matrix $k_s$ next.

First, choose the time interval of interest, $[0,\tau]$, and
rewrite $y(t)=k_su(t)$ as the convolution
\begin{equation}
  y(t) = \int_{-\infty}^{\infty} \kappa(t-s) u(s) ds, \quad \kappa(t) := k_s\delta(t),
\label{eq:resmod}
\end{equation}
where $u(t)$ is at least continuous and has support in the
interval $[0,\tau]$,
\begin{equation*}
u(t)= 0, \quad t \in  (-\infty,0] \cup [\tau,\infty),
\end{equation*}
and $\delta(t)$ is the Dirac distribution. The time interval
$[0,\tau]$ should contain all the time instants where we perform
input-output experiments on the system
(\ref{eq:diss})--(\ref{eq:resmod}). The impulse response
$\kappa(t)$ can be formally expanded in a Fourier series over the
interval $[-\tau,\tau]$,
\begin{equation}
  \kappa(t) \sim \frac{k_s}{2\tau}+\sum_{l=1}^{\infty} \frac{k_s}{\tau} \cos l\omega_0 t, \quad \omega_0:=\pi/\tau.
\label{eq:kdistr}
\end{equation}
To be precise, the Fourier series (\ref{eq:kdistr}) converges to
$k_s\delta(t)$ in the sense of distributions. Define the truncated
Fourier series by $\kappa_N(t):= k_s/(2\tau)+\sum_{l=1}^{N-1}
(k_s/\tau) \cos l\omega_0 t$ and split $\kappa_N(t)$ into a causal
and an anti-causal part:
\begin{align*}
  \kappa_N(t) & =: \kappa_N^c(t) + \kappa_N^{ac}(t) \\
  \kappa_N^c(t) & = 0 \quad (t<0),\quad  \kappa_N^{ac}(t) = 0 \quad (t\geq 0).
\end{align*}
The causal part $\kappa_N^c(t)$ can be realized as the impulse
response of a lossless linear system (\ref{eq:physlinear}) of
order $(2N-1)r$ using the matrices
\begin{equation}
\begin{aligned}
J = J_N & := \begin{bmatrix} 0 &0 &0 \\ 0 & 0 & \Omega_N \\ 0 & -\Omega_N & 0 \end{bmatrix}, \\
\Omega_N & := \text{diag}\{\omega_0 I_r,2\omega_0 I_r,\ldots, (N-1)\omega_0 I_r\},\\
 B= B_N & := \sqrt{\frac{1}{\tau}} \begin{pmatrix} \dfrac{k_f^T}{\sqrt{2}} & k_f^T & \ldots & k_f^T & 0 & \ldots & 0
\end{pmatrix}^T,
\end{aligned}
\label{eq:harmonics}
\end{equation}
where $r= \text{rank } k_s$ and $k_f\in\mathbb{R}^{r \times p}$
satisfies $k_s = k_f^T k_f$. That the series (\ref{eq:kdistr})
converges in the sense of distributions means that for all smooth
$u(t)$ of support in $[0,\tau]$ we have that
\begin{equation*}
  k_su(t)  = \lim_{N\rightarrow \infty} \int_{-\infty}^{\infty}  \left(\kappa_N^{ac}(t-s) + \kappa_N^c(t-s) \right) u(s)ds.
\end{equation*}
A closer study of the two terms under the integral reveals that
\begin{align*}
   \lim_{N\rightarrow \infty} \int_{-\infty}^{\infty} \kappa_N^{ac}(t-s)u(s)ds  & = \frac{1}{2} k_s u(t+), \\
   \lim_{N\rightarrow \infty} \int_{-\infty}^{\infty}\kappa_N^c(t-s)u(s)ds  & = \frac{1}{2} k_s u(t-),
\end{align*}
because of the anti-causal/causal decomposition and
$\kappa_N^{c}(t)=\kappa_N^{ac}(-t),\, t>0$. Thus since $u(t)$ is
smooth, we can also model $y(t)=k_su(t)$ using only the causal
part $\kappa_n^c(t)$ if it is scaled by a factor of two. This
leads to a linear lossless approximation of $y(t)=k_su(t)$ that we
denote by the linear operator
$K_N:\mathcal{C}^2(0,\tau)\rightarrow \mathcal{C}^2(0,\tau)$
defined by
\begin{equation}
\begin{aligned}
  y_N(t) =(K_Nu)(t) &=  \int_{-\infty}^{\infty} 2\kappa_N^{c}(t-s)u(s) ds \\
  & = \int_0^t 2\kappa_N^{c}(t-s)u(s)ds.
\end{aligned}
\label{eq:kapprox}
\end{equation}
Here $\mathcal{C}^2(0,\tau)$ denotes the space of twice
continuously differentiable functions on the interval $[0,\tau]$.
The linear operator $K_N$ is realized by the triple
$(J_N,\sqrt{2}B_N,\sqrt{2}B_N^T)$. We can bound the approximation
error as seen in the following theorem.
\begin{theorem}
\label{prop:errbound} Assume that $u\in\mathcal C^2(0,\tau)$ and
$u(0)=0$. Let $y(t)=ku(t)=k_su(t)+k_au(t)$ with $k_s$ symmetric
positive semidefinite and $k_a$ anti symmetric. Define a lossless
approximation with realization
$(J_N,\sqrt{2}B_N,\sqrt{2}B_N^T,k_a)$, $y_N(t)=K_Nu(t) + k_a
u(t)$. Then the approximation error is bounded as
\begin{equation*}
  \|y(t)-y_N(t)\|_2 \leq \frac{2\bar{\sigma}(k_s)\tau}{\pi^2 (N-1)}\left( \|\dot u(t)\|_2+\|\dot u(0)\|_2+\|\ddot u \|_{L_1[0,t]}\right),
\end{equation*}
for $t$ in $[0,\tau]$.
\end{theorem}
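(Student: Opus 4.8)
The plan is to reduce the statement to a pointwise estimate of a single convolution and then extract the $1/(N-1)$ rate from two integrations by parts, using the hypothesis $u(0)=0$ to remove a boundary term.

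\emph{Reduction and expansion.} Since the feedthrough $k_a u(t)$ enters $y$ and $y_N$ identically, $y(t)-y_N(t)=k_s u(t)-K_N u(t)$, so it suffices to bound $\|k_s u(t)-K_N u(t)\|_2$. Using $2\kappa_N^c(t-s)=2\kappa_N(t-s)$ for $s\le t$ and the definition of $\kappa_N$, I would write
\[
  K_N u(t)=\frac{k_s}{\tau}\int_0^t u(s)\,ds+\sum_{l=1}^{N-1}\frac{2k_s}{\tau}\int_0^t\cos\!\big(l\omega_0(t-s)\big)u(s)\,ds .
\]
The discussion preceding the theorem already yields $K_N u(t)\to k_s u(t)$ for $t\in(0,\tau)$; combined with the summability established below this identifies the error with the tail of the series, $k_s u(t)-K_N u(t)=\sum_{l=N}^{\infty}\frac{2k_s}{\tau}\int_0^t\cos(l\omega_0(t-s))u(s)\,ds$ (the endpoint $t=\tau$ following afterwards by continuity of both sides).

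\emph{The key computation.} Substituting $\sigma=t-s$ and integrating by parts twice,
\begin{align*}
  \int_0^t\cos\!\big(l\omega_0(t-s)\big)u(s)\,ds
  &=\int_0^t\cos(l\omega_0\sigma)\,u(t-\sigma)\,d\sigma\\
  &=\frac{1}{l^2\omega_0^2}\Big(\dot u(t)-\cos(l\omega_0 t)\,\dot u(0)-\int_0^t\cos(l\omega_0\sigma)\,\ddot u(t-\sigma)\,d\sigma\Big).
\end{align*}
Here the hypothesis $u(0)=0$ is exactly what kills the boundary contribution $\tfrac{\sin(l\omega_0 t)}{l\omega_0}u(0)$ produced at $\sigma=t$ by the first integration by parts (the $\sigma=0$ endpoint already vanishes since $\sin 0=0$). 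Taking $\|\cdot\|_2$ bounds the $l$-th summand by $\tfrac{2\bar\sigma(k_s)}{\tau l^2\omega_0^2}\big(\|\dot u(t)\|_2+\|\dot u(0)\|_2+\int_0^t\|\ddot u(s)\|_2\,ds\big)$, which is summable in $l$ and hence legitimizes the interchange of limit and sum used above.

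\emph{Assembly.} Applying $\|k_sA\|_2\le\bar\sigma(k_s)\|A\|_2$, the triangle inequality in the sum and under the integral, $|\cos|\le 1$, the change of variables $\int_0^t\|\ddot u(t-\sigma)\|_2\,d\sigma=\int_0^t\|\ddot u(\sigma)\|_2\,d\sigma\le\|\ddot u\|_{L_1[0,t]}$, and $\sum_{l=N}^{\infty}l^{-2}\le\int_{N-1}^{\infty}x^{-2}\,dx=(N-1)^{-1}$, I obtain
\[
  \|k_s u(t)-K_N u(t)\|_2\le\frac{2\bar\sigma(k_s)}{\tau\omega_0^2(N-1)}\big(\|\dot u(t)\|_2+\|\dot u(0)\|_2+\|\ddot u\|_{L_1[0,t]}\big),
\]
and substituting $\omega_0=\pi/\tau$, so that $\tau\omega_0^2=\pi^2/\tau$, turns the prefactor into $\tfrac{2\bar\sigma(k_s)\tau}{\pi^2(N-1)}$, which is the claim. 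I do not expect a genuine obstacle: this is a direct two-integrations-by-parts estimate, and the only point deserving care is the identification of the approximation error with the tail of the Fourier series — i.e. exchanging the $N\to\infty$ limit with the summation — which rests on the convergence $K_N u\to k_s u$ from the text together with the absolute summability bound above. One harmless slack is that the natural estimate produces $\int_0^t\|\ddot u(s)\|_2\,ds$, which the statement replaces by the (no smaller) quantity $\|\ddot u\|_{L_1[0,t]}$.
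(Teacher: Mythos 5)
Your proposal is correct and follows essentially the same route as the paper's proof: identify the error with the tail $\sum_{l\ge N}(2k_s/\tau)\int_0^t\cos(l\omega_0(t-s))u(s)\,ds$, integrate by parts twice (using $u(0)=0$ to kill the boundary term), and sum $\sum_{l\ge N}l^{-2}\le (N-1)^{-1}$ with $\omega_0=\pi/\tau$. The only differences are cosmetic — you substitute $\sigma=t-s$ before integrating by parts and are slightly more explicit about justifying the interchange of limit and sum, and about the harmless $\|\cdot\|_2$ versus $\|\cdot\|_1$ slack in the $\ddot u$ term.
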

\begin{proof}
We have that $y(t)-y_N(t) = \sum_{l=N}^{\infty}(2k_s/\tau)
\int_0^{t} \cos l\omega_0(t-s)u(s)ds$,  $t\in[0,\tau]$. The order
of summation and integration has changed because this is how the
value of the series is defined in distribution sense. We proceed
by using repeated integration by parts on each term in the series.
It holds that $\int_0^t \cos l\omega_0(t-s)u(s)ds = [\int_0^t \sin
l\omega_0(t-s)\dot u(s)ds]/(l\omega_0) = [\dot u(t)-\dot u(0) \cos
l\omega_0 t  -\int_0^t \cos l\omega_0(t-s)\ddot
u(s)ds]/(l^2\omega_0^2)$. Hence, we have the bound
\begin{multline*} \|y(t)-y_N(t)\|_2 \leq \frac{2\bar \sigma
(k_s)}{\tau} \sum_{l=N}^{\infty} \frac{1}{l^2 \omega_0^2}( \|\dot
u(t)\|_2
\\ +\|\dot u(0)\|_2+ \int_0^t \|\ddot u(s)\|_1 ds).
\end{multline*}
Since $\sum_{l=N}^{\infty}1/l^2 \leq 1/(N-1)$, we can establish
the bound in the theorem.
\end{proof}

The theorem shows that by choosing the truncation order $N$
sufficiently large, the memoryless model (\ref{eq:diss}) can be
approximated as well as we like with a lossless linear system, if
inputs are smooth. Hence we cannot then distinguish between the
systems $y=ku$ and $y_N=K_Nu+k_au$ using finite-time input-output
experiments. On physical grounds one may prefer the model
$K_N+k_a$ even though it is more complex, since it assumes the
form (\ref{eq:physlinear}) of a lossless system (and is time
reversible if $k$ is reciprocal, see Theorem~\ref{thm:timerev}).
Additional support for this idea is given in
Section~\ref{sec:fluct}. Note that the lossless approximation
$K_N$ is far from unique: The time interval $[0,\tau]$ is
arbitrary, and other Fourier expansions than (\ref{eq:kdistr}) are
possible to consider. The point is, however, that it is always
possible to approximate the dissipative behavior using a lossless
model.

It is often a reasonable assumption that inputs $u(t)$, for
example voltages, are smooth if we look at a sufficiently fine
time scale. This is because we usually cannot change inputs
arbitrarily fast due to physical limitations. Physically, we can
think of the approximation order $(2N-1)r$ as the number of
degrees of freedom in a physical system, usually of the order of
Avogadro's number, $N\approx 10^{23}$. It is then clear that the
interval length $\tau$ can be very large without making the
approximation error bound in Theorem~\ref{prop:errbound} large.
This explains how the dissipative system (\ref{eq:diss}) is
consistent with a physics based on energy conserving systems.

\begin{remark}
\label{rem:transmission} Note that it is well known that a
dissipative memoryless system can be modeled by an
\emph{infinite-dimensional} lossless system. We can model an
electrical resistor by a semi-infinite lossless transmission line
using the \emph{telegraphists's equation} (the wave equation), see
\cite{cheng}, for example. If the inductance and capacitance per
unit length of the line are $L$ and $C$, respectively, then the
characteristic impedance of the line, $\sqrt{L/C}$, is purely
resistive. One possible interpretation of $K_N$ is as a
finite-length lossless transmission line where only the $N$ lowest
modes of the telegraphists's equation are retained. Also in the
physics literature lossless (or Hamiltonian) approximations of
dissipative memoryless systems can be found. In
\cite{zwanzig73,ford+kac86,caldeira+83}, a so-called \emph{Ohmic
bath} is used, for example. Note that it is not shown in these
papers when, and how fast, the approximation converges to the
dissipative system. This is in contrast to the analysis presented
herein, and the error bound in Theorem~\ref{prop:errbound}.
\end{remark}

\subsection{Lossless approximation of dissipative systems with memory} \label{sec:memory} In
this section, we generalize the procedure from
Section~\ref{sec:staticio} to dissipative systems that have
memory. We consider asymptotically stable time-invariant linear
causal systems $G$ with impulse response
$g(t)\in\mathbb{R}^{p\times p}$. Their input-output relation is
given by
\begin{equation}
 y(t) = (Gu)(t) = \int_0^t g(t-s)u(s)ds.
\label{eq:Gio}
\end{equation}
Possible direct terms in $G$ can be approximated separately as
shown in Section~\ref{sec:staticio}. The system (\ref{eq:Gio}) is
dissipative with respect to the work rate $w(t)=y(t)^Tu(t)$ if and
only if $\int_0^\tau y(t)^Tu(t)dt \geq 0$, for all $\tau \geq 0$
and admissible $u(t)$. An equivalent condition, see
\cite{willems72B}, is that the transfer function satisfies
\begin{equation}
  \hat g(j\omega) + \hat g(-j\omega)^T \geq 0 \quad \text{for all} \quad \omega.
\label{eq:PR}
\end{equation}
Here $\hat g(j\omega)$ is the Fourier transform of $g(t)$.

We will next consider the problem of how well, and when, a system
(\ref{eq:Gio}) can be approximated using a linear lossless system
(\ref{eq:physlinear}) (call it $G_N$) with fixed initial state
$x_0$,
\begin{equation}
y_N(t) = B^T e^{Jt}x_0 + \int_0^t B^T e^{J(t-s)}B u(s)ds,
\label{eq:lossio}
\end{equation}
for a set of input signals. Let us formalize the problem.
\begin{problem}
For any fixed time horizon $[0,\tau]$ and arbitrarily small
$\epsilon>0$, when is it possible to find a lossless system with
fixed initial state $x_0$ and output $y_N$ such that
\begin{equation}
  \|y(t)-y_N(t)\|_2 \leq \epsilon \|u\|_{L_2[0,t]},
\label{eq:lossapprox}
\end{equation}
for all input signals $u \in L_2[0,t]$ and $0\leq t \leq \tau$?
\label{prob:1}
\end{problem}

Note that we require $x_0$ to be fixed in Problem~\ref{prob:1}, so
that it is independent of the applied input $u(t)$. This means the
approximation should work even if the applied input is not known
beforehand. Let us next state a necessary condition for linear
lossless approximations.
\begin{proposition}
\label{prop:initial} Assume there is a linear lossless system
$G_N$ that solves Problem~\ref{prob:1}. Then it holds that
\begin{itemize}
\item[(i)] If $x_0\neq 0$, then $x_0$ is an unobservable state;
\item[(ii)] If $x_0\neq 0$, then $x_0$ is an uncontrollable state; and
\item[(iii)] If the realization of $G_N$ is minimal, then
    $x_0=0$.
\end{itemize}
\end{proposition}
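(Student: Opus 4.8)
The plan is to exploit the defining energy balance \eqref{eq:powbalance} of the lossless system $G_N$ together with the approximation inequality \eqref{eq:lossapprox}, using two carefully chosen inputs: the zero input, and a short pulse that ``probes'' the direction $x_0$. First I would address (i). Take $u\equiv 0$ on $[0,\tau]$. Then $y(t)=(Gu)(t)=0$ while $y_N(t)=B^Te^{Jt}x_0$, so \eqref{eq:lossapprox} forces $B^Te^{Jt}x_0=0$ for all $t\in[0,\tau]$; differentiating repeatedly at $t=0$ gives $B^TJ^k x_0=0$ for all $k\ge 0$, which is precisely the statement that $x_0$ lies in the unobservable subspace of $(J,B^T)$. (Since the system is analytic, vanishing on $[0,\tau]$ already implies $x_0$ is unobservable, but the derivative computation makes it elementary.)

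Next, for (ii), I would use the energy structure. Because the realization has the special lossless form $(J,B,B^T)$ with $J=-J^T$, there is a classical duality: the unobservable subspace of $(J,B^T)$ equals the uncontrollable subspace of $(J,B)$. Indeed, if $B^TJ^k x_0=0$ for all $k$, then for any controllability vector $J^\ell B v$ we have $x_0^T J^\ell B v = \pm (B^T J^\ell x_0)^T v = 0$, using $J^T=-J$ to move the powers of $J$ across; hence $x_0$ is orthogonal to the controllable subspace $\mathrm{span}\{J^\ell B v\}$, i.e.\ $x_0$ is an uncontrollable state. So (ii) follows from (i) purely by the anti-symmetry of $J$, with no further use of \eqref{eq:lossapprox}.

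Finally, (iii) is immediate: if the realization of $G_N$ is minimal, it is both controllable and observable, so the unobservable subspace is $\{0\}$, and (i) forces $x_0=0$.

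I do not expect a serious obstacle here; the only point requiring a little care is the first step, namely justifying that $B^Te^{Jt}x_0\equiv 0$ on $[0,\tau]$ really does pin down $x_0$ as unobservable — one should either invoke analyticity of $t\mapsto e^{Jt}$ or, more self-containedly, differentiate and evaluate at $0$ to extract the Kalman observability conditions $B^TJ^kx_0=0$. Everything else is linear algebra driven by $J=-J^T$. One subtlety worth a remark: the argument uses only the \emph{free response} term of \eqref{eq:lossio} and the case $u\equiv 0$ of \eqref{eq:lossapprox}; the genuine approximation content of Problem~\ref{prob:1} is not needed beyond the trivial input, which is why the conclusion is so clean.
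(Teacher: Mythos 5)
Your proposal is correct and follows essentially the same route as the paper: set $u\equiv 0$ in \eqref{eq:lossapprox} to force $B^Te^{Jt}x_0\equiv 0$ (hence $x_0$ unobservable), use the anti-symmetry $J=-J^T$ to identify the unobservable subspace of $(J,B^T)$ with the orthogonal complement of the controllable subspace of $(J,B)$, and conclude $x_0=0$ from minimality. The only cosmetic difference is that the paper phrases the duality step as $\mathcal{N}(\mathcal{O})=\mathcal{R}(\mathcal{O}^T)^{\perp}=\mathcal{R}(\mathcal{C})^{\perp}$ while you write out the same identity termwise.
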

\begin{proof}
(i): The inequality (\ref{eq:lossapprox}) holds for $u=0$ when
$y=0$. Then (\ref{eq:lossapprox}) reduces to $\|y_N(t)\|_2 \leq
0$, for $t\in[0,\tau]$, which implies $y_N(t)=B^Te^{Jt}x_0=0$.
Thus a nonzero $x_0$ must be unobservable. (ii): For the lossless
realizations it holds that $\mathcal{N}(\mathcal{O}) =
\mathcal{R}(\mathcal{O}^T)^\perp= \mathcal{R}(\mathcal{C})^\perp$,
where $\mathcal{O}$ and $\mathcal{C}$ are the observability and
controllability matrices for the realization $(J,B,B^T)$. Thus if
$x_0$ is unobservable, it is also uncontrollable. (iii): Both (i)
and (ii) imply (iii).
\end{proof}

Proposition~\ref{prop:initial} significantly restricts the classes
of systems $G$ we can approximate using linear lossless
approximations. Intuitively, to approximate active systems there
must be energy stored in the initial state of $G_N$. But
Proposition~\ref{prop:initial} says that such initial energy is
not available for the inputs and outputs of $G_N$. The next
theorem shows that we can approximate $G$ using $G_N$ if, and only
if, $G$ is dissipative.
\begin{theorem}
\label{thm:diss2} Suppose $G$ is a linear time-invariant causal
system (\ref{eq:Gio}), where $\|g(t)\|_2$ is uniformly bounded,
$g(t)\in L_1 \cap L_2(0,\infty)$, and $\dot g(t)\in
L_1(0,\infty)$. Then Problem~\ref{prob:1} is solvable using a
linear lossless $G_N$ if, and only if, $G$ is dissipative.
\end{theorem}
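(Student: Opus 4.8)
The plan is to prove both directions separately. For the ``only if'' direction, suppose $G_N$ solves Problem~\ref{prob:1}. By Proposition~\ref{prop:initial}, the initial state $x_0$ contributes nothing observable, so effectively $y_N(t)=\int_0^t B^Te^{J(t-s)}Bu(s)ds$ with $J=-J^T$. Since $G_N$ is lossless with supply rate $y_N^Tu$, energy conservation gives $\int_0^\tau y_N(t)^Tu(t)dt = E(x(\tau))-E(x(0)) = \frac12\|x(\tau)\|_2^2 \geq 0$ for every admissible input (using $x(0)=x_0$ unobservable/uncontrollable, so we may take the energy reference so that the stored energy is nonnegative; more carefully, the lossless dissipation inequality $\int_0^\tau y_N^Tu\,dt\geq -E(x_0)$ holds, and by controllability we can drive $x_0$ to a minimal-energy state). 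Then for any admissible $u$, $\int_0^\tau y^Tu\,dt = \int_0^\tau y_N^Tu\,dt + \int_0^\tau (y-y_N)^Tu\,dt \geq -\epsilon\|u\|_{L_2[0,\tau]}^2$ using Cauchy--Schwarz and (\ref{eq:lossapprox}). Letting $\epsilon\to 0$ yields $\int_0^\tau y^Tu\,dt\geq 0$ for all $\tau\le\tau_0$ and all admissible $u$; since $\tau_0$ is arbitrary and $G$ is time-invariant, this gives dissipativity of $G$, equivalently the positive-real condition (\ref{eq:PR}).

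For the ``if'' direction, assume $G$ is dissipative, so $\hat g(j\omega)+\hat g(-j\omega)^T\geq 0$. The strategy is constructive, mirroring Section~\ref{sec:staticio} but now over the memory kernel. First split $g$ into its ``symmetric'' and ``anti-symmetric'' parts in the frequency domain; the anti-symmetric part corresponds to a lossless transfer function and can be realized exactly (or nearly so) in the form (\ref{eq:physlinear}). For the remaining passive part, use the spectral factorization guaranteed by (\ref{eq:PR}): write $\hat g(j\omega)+\hat g(-j\omega)^T = \hat h(j\omega)^*\hat h(j\omega)$ for a stable $h$, which plays the role of $k_f$ from the memoryless case. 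Restrict attention to the time horizon $[0,\tau]$, expand the relevant kernel in a Fourier series on $[-\tau,\tau]$ with harmonics $l\omega_0$, $\omega_0=\pi/\tau$, truncate at order $N$, and split into causal/anti-causal parts exactly as in (\ref{eq:harmonics})--(\ref{eq:kapprox}). This produces a lossless realization $(J_N, B_N, B_N^T)$ (block-diagonal rotation blocks at frequencies $l\omega_0$) whose causal impulse response, doubled, approximates $g$ on $[0,\tau]$. The hypotheses $g\in L_1\cap L_2$, $\dot g\in L_1$ give enough regularity (via integration by parts, as in the proof of Theorem~\ref{prop:errbound}) to bound $\|g(t)-g_N(t)\|$ and hence, by Young's/Cauchy--Schwarz inequalities, to obtain (\ref{eq:lossapprox}) with $\epsilon\to 0$ as $N\to\infty$. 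The fixed initial state is taken to be $x_0=0$, consistent with Proposition~\ref{prop:initial}(iii).

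The main obstacle I expect is the ``if'' direction: turning the frequency-domain positive-real condition into a concrete finite-dimensional lossless realization with a uniform-over-inputs error bound. Two points need care. First, the spectral factorization of $\hat g(j\omega)+\hat g(-j\omega)^T$ into $\hat h^*\hat h$ must be done so that $h$ inherits the decay ($L_1\cap L_2$, $\dot h\in L_1$) needed for the convergence estimate — standard but requires invoking a suitable factorization theorem and checking the regularity transfers. Second, one must verify that the Fourier-truncated causal kernel is genuinely the impulse response of a system of the form (\ref{eq:physlinear}) with $J=-J^T$ and matching input/output matrices $B_N, B_N^T$ — this is where the cosine-series structure and the causal/anti-causal symmetry $\kappa_N^c(t)=\kappa_N^{ac}(-t)$ are essential, and where the ``factor of two'' rescaling enters. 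Once the kernel error $\|g-g_N\|_{L_1[0,t]}$ (or an $L_2$ analogue) is shown to vanish as $N\to\infty$ uniformly on $[0,\tau]$, the input-uniform bound (\ref{eq:lossapprox}) follows immediately by convolution inequalities, and Proposition~\ref{prop:initial} confirms no initial energy is needed, closing the equivalence.
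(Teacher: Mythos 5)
Your ``only if'' direction is essentially the paper's argument (energy balance of the lossless system with $x_0=0$ forced by Proposition~\ref{prop:initial}, plus Cauchy--Schwarz and $\epsilon\to0$), and it is correct.

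The ``if'' direction has a genuine gap at exactly the point you flag as the ``main obstacle,'' and your proposed fix does not close it. The condition for a truncated trigonometric kernel on $[-\tau,\tau]$ to be realizable by a finite-dimensional lossless system of the form (\ref{eq:physlinear}) is that each residue $A_k-jB_k$ (the $k$-th Fourier coefficient of the symmetrized extension $\tilde g(t)=g(t)$ for $t\ge0$, $g(-t)^T$ for $t<0$, computed by integrating only over $[-\tau,\tau]$) be positive semidefinite. Dissipativity (\ref{eq:PR}) gives $\hat g(j\omega)+\hat g(-j\omega)^T\ge0$, i.e.\ positivity of the \emph{full-line} transform, but the finite-window coefficients differ from $\frac{1}{\tau}\bigl(\hat g(jk\pi/\tau)+\hat g(-jk\pi/\tau)^T\bigr)$ by a tail term $\Delta_k=\frac{1}{\tau}\int_\tau^\infty\bigl(g(t)e^{-jk\pi t/\tau}+g(t)^Te^{jk\pi t/\tau}\bigr)dt$, and so need not be PSD. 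A spectral factorization $\hat g(j\omega)+\hat g(-j\omega)^T=\hat h(j\omega)^*\hat h(j\omega)$ does not help here: it factors the full-line transform, not the windowed Fourier coefficients, and windowing destroys the product structure (the coefficients of $\tilde g$ on $[-\tau,\tau]$ are not $\hat h_k^*\hat h_k$ for any Fourier coefficients $\hat h_k$ of $h$). The paper's resolution is elementary and avoids factorization entirely: bound $\|\Delta_k\|_2\le\frac{2}{\tau}\int_\tau^\infty\|g(t)\|_1dt$ using $g\in L_1$, add a small multiple $\xi I_p$ to every $A_k$ to restore positive semidefiniteness, and then balance three quantities --- $\tau$ large enough that $\xi$ can be taken of order $\epsilon^2/\tau$, $N$ of order $\tau/\epsilon^2$, and the coefficient decay $\|A_k-jB_k\|_2\le C/(2+k)$ from integration by parts using $\dot g\in L_1$ --- so that both the $N\xi^2$ contribution and the Fourier tail contribute at most $\epsilon^2/2$ each to the Parseval error. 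Without this perturbation-and-balancing step (or an equivalent, e.g.\ using the exact samples $\frac{1}{\tau}(\hat g(jk\pi/\tau)+\hat g(-jk\pi/\tau)^T)$ as residues and controlling the resulting aliasing error), the construction does not yield a lossless realization. A further minor error: your opening step, splitting $\hat g$ into para-Hermitian and para-skew parts and realizing the latter ``exactly'' as a lossless system, fails for systems with memory --- those two parts are not individually causal and stable, and the skew part of a strictly stable $\hat g$ is not realizable by (\ref{eq:physlinear}); only the constant direct term admits such a split.
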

\begin{proof}
See Appendix~\ref{sec:proofdiss2}.
\end{proof}

The proof of Theorem~\ref{thm:diss2} shows that the number of
states needed in $G_N$ is proportional to $\tau/\epsilon^2$, and
again the required state space is large. The result shows that for
finite-time input-output experiments with finite-energy inputs it
is not possible to distinguish between the dissipative system and
its lossless approximations. Theorem~\ref{thm:diss2} illustrates
that a very large class of dissipative systems (macroscopic
systems) can be approximated by the lossless linear systems we
introduced in (\ref{eq:physlinear}). The lossless systems are
dense in the dissipative systems, in the introduced topology.
Again this shows how dissipative systems are consistent with a
physics based on energy-conserving systems.

In \cite[Theorem~8]{willems72B}, necessary and sufficient
conditions for time reversible systems are given. We can now use
this result together with Theorem~\ref{thm:diss2} to prove a
result reminiscent to the \emph{Onsager  reciprocal relations}
which say physical systems tend to be reciprocal, see for example
\cite{wannier}. Before stating the result, we properly define what
is meant by reciprocal and  time reversible systems. These
definitions are slight reformulations of those found in
\cite{willems72B}.

A \emph{signature matrix} $\Sigma_e$ is a diagonal matrix with
entries  either $+1$ and $-1$.
\begin{definition}
\label{def:reciprocal} A linear time-invariant system $G$ with
impulse response $g(t)$ is \emph{reciprocal} with respect to the
signature matrix $\Sigma_e$ if $\Sigma_e g(t) = g(t)^T\Sigma_e$.
\end{definition}
\begin{definition}
\label{def:reversible} Consider a finite-dimensional linear
time-invariant system $G$ and assume that $x(0)=0$. Let $u_1,u_2$
be admissible inputs to $G$, and $y_1,y_2$ be the corresponding
outputs. Then $G$ is \emph{time reversible} with respect to the
signature matrix $\Sigma_e$ if $y_2(t)=\Sigma_e y_1(-t)$ whenever
$u_2(t)=-\Sigma_eu_1(-t)$.
\end{definition}

\begin{theorem}
\label{thm:timerev} Suppose $G$ satisfies the assumptions in
Theorem~\ref{thm:diss2}. Then $G$ is dissipative and reciprocal
with respect to $\Sigma_e$ if, and only if, there exists a
time-reversible (with respect to $\Sigma_e$) arbitrarily good
linear lossless approximation $G_N$ of $G$.
\end{theorem}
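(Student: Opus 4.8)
The plan is to reduce the theorem to Theorem~\ref{thm:diss2} combined with the characterization of time-reversible lossless systems in \cite[Theorem~8]{willems72B}, which states that a minimal linear lossless system realized by $(J,B,B^T)$ is time reversible with respect to $\Sigma_e$ if and only if it is reciprocal with respect to some signature matrix, equivalently if and only if there is a state-space signature matrix $\Sigma_s$ with $\Sigma_s J \Sigma_s = -J$ (or $=J$, depending on the convention) and $\Sigma_s B = B\Sigma_e$. So the real content is: (a) the ``only if'' direction — if $G$ is dissipative and reciprocal, the explicit lossless approximations we have already constructed can be \emph{chosen} to be time reversible; and (b) the ``if'' direction — a time-reversible lossless system is reciprocal, and the $L_2$-limit (in the sense of Problem~\ref{prob:1}) of reciprocal systems is reciprocal, while dissipativity already follows from Theorem~\ref{thm:diss2}.

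For the ``if'' direction I would argue as follows. Suppose $G_N$ is a time-reversible linear lossless approximation of $G$. By \cite[Theorem~8]{willems72B}, time reversibility of $G_N$ (with $x(0)=0$) implies $G_N$ is reciprocal with respect to $\Sigma_e$, i.e. its impulse response $g_N$ satisfies $\Sigma_e g_N(t) = g_N(t)^T \Sigma_e$ for all $t$. Now pass to the limit: since arbitrarily good approximations exist, we may take $N\to\infty$ so that $y_N \to y$ in the norm of \eqref{eq:lossapprox} on $[0,\tau]$ for every $\tau$; testing against impulse-like inputs (or, more carefully, using that convergence in the operator sense of \eqref{eq:lossapprox} forces $g_N \to g$ in an appropriate weak sense on compact time intervals) yields $\Sigma_e g(t) = g(t)^T\Sigma_e$, so $G$ is reciprocal. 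Dissipativity of $G$ is immediate from Theorem~\ref{thm:diss2} since a time-reversible lossless $G_N$ is in particular a lossless $G_N$ solving Problem~\ref{prob:1}.

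For the ``only if'' direction, assume $G$ is dissipative and reciprocal with respect to $\Sigma_e$. Reciprocity means $\hat g(j\omega)$ obeys $\Sigma_e \hat g = \hat g^T \Sigma_e$. I would revisit the construction in the proof of Theorem~\ref{thm:diss2} (Appendix~\ref{sec:proofdiss2}): that proof builds $G_N$ from a Fourier-type expansion of (a causal part of) $g$, much as in Section~\ref{sec:staticio}, producing block matrices $J_N$ of the skew form in \eqref{eq:harmonics} and $B_N$ assembled from the ``square roots'' of the residue/coefficient matrices $k_s = k_f^T k_f$. The point is that the coefficient matrices inherit the symmetry $\Sigma_e (\cdot) = (\cdot)^T \Sigma_e$ from reciprocity of $G$, hence each is similar in a structured way, and one can choose a block-diagonal state-space signature matrix $\Sigma_s$ (alternating $\pm 1$ blocks matched to the cosine/sine harmonic pairs) so that $\Sigma_s J_N \Sigma_s^{-1} = -J_N$ and $\Sigma_s B_N = B_N \Sigma_e$. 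By \cite[Theorem~8]{willems72B} this makes $G_N$ time reversible with respect to $\Sigma_e$, and it is still an arbitrarily good approximation by Theorem~\ref{thm:diss2}.

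The main obstacle I anticipate is the bookkeeping in the ``only if'' direction: verifying that the particular realization used in the proof of Theorem~\ref{thm:diss2} can be put in reciprocal form without destroying the error bound. Concretely, one must check that splitting $g$ into a reciprocal-compatible expansion (so that each harmonic block's coefficient matrix is symmetric after conjugation by $\Sigma_e$) is compatible with the positive-semidefiniteness needed to extract the factor $k_f$, and that the resulting $\Sigma_s$ is genuinely a signature matrix and genuinely satisfies the intertwining relations required by \cite[Theorem~8]{willems72B}. A secondary technical point is making the limiting argument in the ``if'' direction rigorous: convergence in the input-output sense of \eqref{eq:lossapprox} must be shown to transfer the pointwise (or distributional) identity $\Sigma_e g_N = g_N^T \Sigma_e$ to the limit $g$, which I would handle by the same integration-by-parts/Fourier estimates used in Theorem~\ref{prop:errbound} and Theorem~\ref{thm:diss2} rather than by a soft functional-analytic argument.
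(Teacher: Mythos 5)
Your proof is correct in outline and rests on the same two pillars as the paper's: Theorem~\ref{thm:diss2} for existence/dissipativity, and Willems' Theorem~8 for the equivalence between time reversibility and reciprocity of lossless systems. The ``if'' direction matches the paper's almost verbatim (the paper is even softer about the limit: since the construction in Appendix~\ref{sec:proofdiss2} gives $\|g-g_N\|_{L_2[0,\tau]}\leq\epsilon$ directly, reciprocity of $g$ follows because the set of kernels satisfying $\Sigma_e g=g^T\Sigma_e$ is $L_2$-closed; your ``impulse-like inputs'' detour is unnecessary but harmless). The divergence is in the ``only if'' direction, and it is exactly where you flag your ``main obstacle.'' You plan to exhibit a state-space signature matrix $\Sigma_s$ intertwining $J_N$ and $B_N$, which forces you into the bookkeeping of the realization and which you leave unverified. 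The paper avoids this entirely by using the input--output form of Willems' Theorem~8 (reciprocal $\Leftrightarrow$ time reversible for lossless systems, which you yourself quote in your opening paragraph): it suffices to check that the \emph{impulse response} $g_N$ in (\ref{eq:lossapproxgen}) is reciprocal, and this is a one-line computation from the coefficient formulas (\ref{eq:fouriercoeffs}) --- since $A_k$ is built from $g+g^T$ and $B_k$ from $g-g^T$, the identity $\Sigma_e g=g^T\Sigma_e$ immediately gives $\Sigma_e A_k=A_k^T\Sigma_e$ and $\Sigma_e B_k=B_k^T\Sigma_e$, and the regularizing perturbation $\xi I_p$ is trivially reciprocal, so $\Sigma_e g_N=g_N^T\Sigma_e$ with no effect on the error bound. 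So your anticipated obstacle (compatibility of the reciprocal structure with the positive-semidefinite factorization and the construction of $\Sigma_s$) evaporates if you stay at the level of impulse responses; as written, that step of your argument is the one genuine gap, though it is closable either by the paper's shortcut or, with more labor, along the state-space route you sketch.
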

\begin{proof}
See Appendix~\ref{sec:prooftimerev}.
\end{proof}
Hence, one can understand that macroscopic physical systems close
to equilibrium usually are reciprocal because their underlying
dynamics are lossless \emph{and} time reversible.

\begin{remark}
There is a long-standing debate in physics about how macroscopic
time-irreversible dynamics can result from microscopic
time-reversible dynamics. The debate goes back to
\emph{Loschmidt's paradox} and the \emph{Poincar\'e recurrence
theorem}. The Poincar\'e recurrence theorem says that bounded
trajectories of volume-preserving systems (such as lossless
systems) will return arbitrarily close to their initial conditions
if we wait long enough (the Poincar\'e recurrence time). This
seems counter-intuitive for real physical systems. One common
argument is that the Poincar\'e recurrence time for macroscopic
physical systems is so long that we will never experience a
recurrence. But this argument is not universally accepted and
other explanations exist. The debate still goes on, see for
example \cite{Lebowitz99}. In this paper we construct lossless and
time-reversible systems with arbitrarily large Poincar\'e
recurrence times, that are consistent with observations of all
linear dissipative (time-irreversible) systems, as long as those
observations take place before the recurrence time. For a
control-oriented related discussion about the arrow of time, see
\cite{georgiou+08}.
\end{remark}

\subsection{Nonlinear lossless approximations}
\label{sec:nonlinapprox} In Section~\ref{sec:staticio}, it was
shown that a dissipative memoryless system can be approximated
using a lossless linear system. Later in Section~\ref{sec:memory}
it was also shown that the approximation procedure can be applied
to any dissipative (linear) system. Because of
Proposition~\ref{prop:initial} and Theorem~\ref{thm:diss2}, it is
clear that it is not possible to approximate a linear active
system using a \emph{linear} lossless system with fixed initial
state. Next we will show that it is possible to solve
Problem~\ref{prob:1} for active systems if we use \emph{nonlinear}
lossless approximations.

Consider the simplest possible active system,
\begin{equation}
  \label{eq:active_res}
  y(t) = k u(t),
\end{equation}
where $k\in\mathbb{R}^{p \times p}$ is negative definite. This can
be a model of a negative resistor, for example. More general
active systems are considered below. The reason a linear lossless
approximation of (\ref{eq:active_res}) cannot exist is that the
active device has an internal infinite energy supply, but we
cannot store any energy in the initial state of a linear lossless
system and simultaneously track a set of outputs, see
Proposition~\ref{prop:initial}. However, if we allow for lossless
nonlinear approximations, (\ref{eq:active_res}) can be arbitrarily
well approximated. This is shown next by means of an example.

Consider the nonlinear system
\begin{equation}
  \begin{aligned}
  \dot x_E(t) & = \frac{1}{\sqrt{2E_0}} u(t)^Tku(t), \quad x_E(0) = \sqrt{2E_0},\, E_0>0, \\
  y_E(t) & = \frac{x_E(t)}{\sqrt{2E_0}}ku(t),
  \end{aligned}
  \label{eq:nonlin_approx}
\end{equation}
with a scalar energy-supply state $x_E(t)$, and total energy
$E(x_E) = \frac{1}{2}x_E^2$. The system (\ref{eq:nonlin_approx})
has initial total energy $\frac{1}{2} x_E(0)^2 =: E_0$, and is a
lossless system with respect to the work rate $w(t)=y_E(t)u(t)$,
since
\begin{equation*}
    \frac{d}{dt} E(x_E(t)) = x_E(t)\dot x_E(t) = y_E(t)^Tu(t).
\end{equation*}
The input-output relation of (\ref{eq:nonlin_approx}) is given by
\begin{equation}
\begin{aligned}
  x_E(t) & = \sqrt{2E_0} + \frac{1}{\sqrt{2E_0}} \int_0^t u(s)^Tku(s) ds, \\
    y_E(t) &  = k u(t) + \frac{1}{2E_0}ku(t) \int_0^t u(s)^Tku(s)ds.
\end{aligned}
\label{eq:nonlin_approx_io}
\end{equation}
We have the following approximation result.
\begin{theorem}
\label{thm:nonlin_bound} For uniformly bounded inputs,
$\|u(t)\|_2\leq \bar u$, $t\in[0,\tau]$, the error between the
active system (\ref{eq:active_res}) and the nonlinear lossless
approximation (\ref{eq:nonlin_approx}) can be bounded as
\begin{equation*}
  \|y_E(t)-y(t)\|_2 \leq \epsilon \|u\|_{L_2[0,t]},
\end{equation*}
for $t\in[0,\tau]$, where $\epsilon = \bar \sigma (k)^2 \bar u^2
\sqrt{\tau}/(2E_0)$.
\end{theorem}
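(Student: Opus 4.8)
The plan is to directly estimate $y_E(t)-y(t)$ using the explicit input-output relation (\ref{eq:nonlin_approx_io}). Subtracting (\ref{eq:active_res}) from the second line of (\ref{eq:nonlin_approx_io}) immediately gives
\begin{equation*}
  y_E(t)-y(t) = \frac{1}{2E_0} k u(t) \int_0^t u(s)^T k u(s)\, ds,
\end{equation*}
so the whole problem reduces to bounding the scalar factor $\frac{1}{2E_0}\int_0^t u(s)^Tku(s)\,ds$ and the vector $ku(t)$ separately. For the vector part, $\|ku(t)\|_2 \le \bar\sigma(k)\|u(t)\|_2$. For the scalar integral, $|u(s)^Tku(s)| \le \bar\sigma(k)\|u(s)\|_2^2 \le \bar\sigma(k)\bar u^2$ by the uniform bound, so the integral over $[0,t]\subseteq[0,\tau]$ is at most $\bar\sigma(k)\bar u^2\,t \le \bar\sigma(k)\bar u^2\,\tau$.

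Combining these, I get the pointwise bound
\begin{equation*}
  \|y_E(t)-y(t)\|_2 \le \frac{\bar\sigma(k)^2 \bar u^2 \tau}{2E_0}\,\|u(t)\|_2 .
\end{equation*}
The remaining step is to convert the pointwise factor $\|u(t)\|_2$ on the right into the $L_2$-norm $\|u\|_{L_2[0,t]}$ required by the statement. This is where a little care is needed: $\|u(t)\|_2$ and $\|u\|_{L_2[0,t]}$ are genuinely different quantities, so one more application of the uniform bound is the cleanest route. Write $\|u(t)\|_2 \le \bar u$ and, to reintroduce an $L_2$-type quantity matching the target $\epsilon = \bar\sigma(k)^2\bar u^2\sqrt{\tau}/(2E_0)$, note that the claimed inequality with this $\epsilon$ is
\begin{equation*}
  \|y_E(t)-y(t)\|_2 \le \frac{\bar\sigma(k)^2\bar u^2\sqrt{\tau}}{2E_0}\,\|u\|_{L_2[0,t]} .
\end{equation*}
Since $\|u\|_{L_2[0,t]} = \sqrt{\int_0^t\|u(s)\|_2^2\,ds}$ and $\|u(s)\|_2 \le \bar u$ would give $\|u\|_{L_2[0,t]}\le \bar u\sqrt{t}$ (the wrong direction), I instead keep one factor of $\|u\|_{L_2[0,t]}$ and bound the other ingredients: rewrite $\int_0^t u(s)^Tku(s)\,ds \le \bar\sigma(k)\|u\|_{L_2[0,t]}^2 \le \bar\sigma(k)\bar u\sqrt{\tau}\,\|u\|_{L_2[0,t]}$ using $\|u\|_{L_2[0,t]}\le \bar u\sqrt{\tau}$ once, and bound $\|ku(t)\|_2\le\bar\sigma(k)\bar u$. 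Then
\begin{equation*}
  \|y_E(t)-y(t)\|_2 \le \frac{1}{2E_0}\,\bar\sigma(k)\bar u \cdot \bar\sigma(k)\bar u\sqrt{\tau}\,\|u\|_{L_2[0,t]} = \frac{\bar\sigma(k)^2\bar u^2\sqrt{\tau}}{2E_0}\,\|u\|_{L_2[0,t]},
\end{equation*}
which is exactly the asserted bound with the stated $\epsilon$.

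The only genuinely delicate point is the bookkeeping in that last paragraph: one must split the three scalar/vector ingredients ($ku(t)$, the integrand, and the length of the integration interval) so that exactly one factor of $\|u\|_{L_2[0,t]}$ survives and the rest collapse into $\bar\sigma(k)^2\bar u^2\sqrt{\tau}$ via the uniform bound $\|u(t)\|_2\le\bar u$ and $\|u\|_{L_2[0,t]}\le\bar u\sqrt{\tau}$. No dynamics or losslessness argument is needed for the error estimate itself — losslessness of (\ref{eq:nonlin_approx}) is only what makes the approximation "physical," and was already verified in the text preceding the theorem. So I expect no real obstacle, just the need to organize the elementary estimates to land precisely on the advertised constant.
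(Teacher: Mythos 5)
Your proposal is correct and follows essentially the same route as the paper: subtract $ku(t)$ from the explicit input-output relation (\ref{eq:nonlin_approx_io}), bound $\|ku(t)\|_2\le\bar\sigma(k)\|u(t)\|_2$ and $|u(s)^Tku(s)|\le\bar\sigma(k)\|u(s)\|_2^2$ so that $\int_0^t\|u(s)\|_2^2\,ds=\|u\|_{L_2[0,t]}^2$ appears, and then spend the uniform bound $\|u\|_2\le\bar u$ to convert one $L_2$ factor into $\bar u\sqrt{\tau}$. The intermediate detour through the purely pointwise bound is harmless, and the final bookkeeping lands exactly on the paper's constant.
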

\begin{proof}
A simple bound on $y_E(t)-ku(t)$ from (\ref{eq:nonlin_approx_io})
gives  $\|y_E(t)-y(t)\|_2 \leq \frac{\bar \sigma
(k)^2\|u(t)\|_2}{2E_0}  \int_0^t \|u(s)\|_2^2 ds$. Then using
$\|u(t)\|_2\leq \bar u$, $t\in[0,\tau]$, gives the result.
\end{proof}

The error bound in Theorem~\ref{thm:nonlin_bound} can be made
arbitrarily small for finite time intervals if the initial total
energy $E_0$ is large enough. This example shows that active
systems can also be approximated by lossless systems, if the
lossless systems are allowed to be nonlinear and are charged with
initial energy.

The above approximation method can in fact be applied to much more
general systems. Consider the ordinary differential equation
\begin{equation}
\begin{aligned}
    \dot x(t) & = f(x(t),u(t)), \quad x(0) = x_{0}, \\
  y(t) & =  g(x(t),u(t)),
\end{aligned}
\label{eq:ODE}
\end{equation}
where $x(t)\in\mathbb{R}^n$, and $u(t),y(t)\in\mathbb{R}^p$. In
general, this is not a lossless system with respect to the supply
rate $w(t)=y(t)^Tu(t)$. A nonlinear lossless approximation of
(\ref{eq:ODE}) is given by
\begin{equation}
\begin{aligned}
    \dot{\hat x}(t) & = \frac{x_{E}(t)}{\sqrt{2E_0}} f(\hat x(t),u(t)), \quad \hat x(0) = x_{0}, \\
    \dot x_{E}(t) & = \frac{1}{\sqrt{2E_0}} g(\hat x(t),u(t))^Tu(t) - \frac{1}{\sqrt{2E_0}} \hat x(t)^T f(\hat x(t),u(t)), \\
    y_E(t) & =  \frac{x_{E}(t)}{\sqrt{2E_0}}g(\hat x(t),u(t)), \quad x_{E}(0) = \sqrt{2E_0}, \\
\end{aligned}
\label{eq:ODE_approx}
\end{equation}
where again $x_{E}(t)$ is a scalar energy-supply state, and $\hat
x(t)\in\mathbb{R}^n$ can be interpreted as an approximation of
$x(t)$ in (\ref{eq:ODE}). That (\ref{eq:ODE_approx}) is lossless
can be verified using the storage function
\begin{equation*}
E = \frac{1}{2} \hat x(t)^T\hat x(t) + \frac{1}{2} x_{E}(t)^2,
\end{equation*}
since
\begin{align*}
\dot E & = (x_{E}/\sqrt{2E_0}) (\hat x^T f(\hat x,u) + g(\hat x,u)^Tu -
\hat x^T f(\hat x,u)) \\
& = (x_{E}/\sqrt{2E_0}) g(\hat x,u)^Tu = y_E^T u = w.
\end{align*}
Since $x_{E}(t)/\sqrt{2E_0} \approx 1$ for small $t$, it is
intuitively clear that $\hat x(t)$ in (\ref{eq:ODE_approx}) will
be close to $x(t)$ in (\ref{eq:ODE}), at least for small $t$ and
large initial energy $E_0$. We have the following theorem.
\begin{theorem}
\label{thm:nonlin_bound2} Assume that $\partial f/\partial x$ is
continuous with respect to $x$ and $t$, and that (\ref{eq:ODE})
has a unique solution $x(t)$ for $0\leq t\leq \tau$. Then there
exist positive constants $C_1$ and $E_1$ such that for all
$E_0\geq E_1$ (\ref{eq:ODE_approx}) has a unique solution $\hat
x(t)$ which satisfies $\|x(t)-\hat x(t)\|_2 \leq C_1/\sqrt{2E_0}$
for all $0\leq t \leq \tau$.
\end{theorem}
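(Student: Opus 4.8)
The plan is to show that the difference $e(t) := \hat{x}(t) - x(t)$ satisfies a perturbed version of the original ODE, and then to close the argument with Gr\"onwall's inequality once we control how far the ``clock factor'' $\rho(t) := x_E(t)/\sqrt{2E_0}$ deviates from $1$. First I would note that since $\partial f/\partial x$ is continuous on the compact tube $\{(x(t)+v,t): \|v\|_2\le 1,\ 0\le t\le\tau\}$ around the known solution $x(t)$, there is a Lipschitz constant $L$ and a bound $M := \sup_{0\le t\le\tau}\|f(x(t),u(t))\|_2$, and likewise $g(x(t),u(t))$ and $g(x(t),u(t))^Tu(t)$ are bounded, say by $M_g$, on $[0,\tau]$. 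As long as $\hat{x}$ stays within distance $1$ of $x$ and $\rho$ stays near $1$ (both to be justified a posteriori by choosing $E_1$ large), the vector field $\rho(t)f(\hat{x}(t),u(t))$ is Lipschitz in $\hat{x}$, so (\ref{eq:ODE_approx}) has a unique local solution; the real work is the a priori estimate that lets us extend it to all of $[0,\tau]$.

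The key computation is to write, for $t$ in the interval where the solution exists,
\begin{equation*}
\dot{e}(t) = \rho(t) f(\hat{x}(t),u(t)) - f(x(t),u(t))
= \rho(t)\bigl(f(\hat{x}(t),u(t)) - f(x(t),u(t))\bigr) + (\rho(t)-1) f(x(t),u(t)),
\end{equation*}
so that, while $\rho(t)\le 2$ and $\|e(t)\|_2\le 1$,
\begin{equation*}
\frac{d}{dt}\|e(t)\|_2 \le 2L\,\|e(t)\|_2 + M\,|\rho(t)-1|.
\end{equation*}
Next I would bound $|\rho(t)-1|$ directly from the definition: $\rho(t)-1 = \frac{1}{2E_0}\int_0^t \bigl(g(\hat{x},u)^Tu - \hat{x}^T f(\hat{x},u)\bigr)\,ds$, and the integrand is bounded by a constant $C_0$ depending only on $M$, $M_g$ and $\sup_t\|x(t)\|_2+1$ on the tube, hence $|\rho(t)-1|\le C_0\tau/(2E_0)$ uniformly on $[0,\tau]$. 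Feeding this into the differential inequality and applying Gr\"onwall gives $\|e(t)\|_2 \le \dfrac{M C_0 \tau}{2E_0}\cdot\dfrac{e^{2L\tau}-1}{2L}$, which is of the claimed form $C_1/\sqrt{2E_0}$ with $C_1 := \dfrac{M C_0\tau(e^{2L\tau}-1)}{2L}\cdot\dfrac{1}{\sqrt{2E_0}}$ — more cleanly, since the bound is $O(1/E_0)$ it is certainly $O(1/\sqrt{2E_0})$, so we may simply set $C_1$ to absorb the constants.

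The one subtlety — and the main obstacle — is circularity: the Lipschitz bound $L$, the constant $C_0$, and hence the Gr\"onwall estimate, are only valid as long as $\hat{x}$ has not left the unit tube around $x$ and $\rho$ has not left, say, $[1/2,2]$. I would handle this by a standard continuity/bootstrap argument: choose $E_1$ large enough that both $C_0\tau/(2E_0)\le 1/2$ (controlling $\rho$) and $\dfrac{M C_0\tau(e^{2L\tau}-1)}{4LE_0}\le 1/2$ (controlling $e$) hold for all $E_0\ge E_1$; then on the maximal subinterval of $[0,\tau]$ where $\|e\|_2\le 1$ and $\rho\in[1/2,2]$, the derived bounds actually force $\|e\|_2\le 1/2$ and $\rho\in[3/4,5/4]$, strictly interior, so the subinterval cannot end before $\tau$, and the local existence result then extends the solution over all of $[0,\tau]$. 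This closes the estimate and yields uniqueness and the quantitative bound simultaneously.
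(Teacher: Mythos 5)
Your proof is correct and rests on the same underlying mechanism as the paper's, but it is self-contained where the paper is not: the paper introduces $\Delta x_E = x_E - \sqrt{2E_0}$ and $\epsilon_0 = 1/\sqrt{2E_0}$, observes that (\ref{eq:ODE_approx}) is then a regular perturbation of (\ref{eq:ODE}) in $\epsilon_0$, and simply invokes the perturbation theorem of Khalil (Section~10.1) --- whose proof is precisely the Gr\"onwall-plus-continuation argument you carry out explicitly. Your version buys two things. First, it makes visible the bootstrap needed to keep $\hat x$ in the tube where the Lipschitz constant applies, which the citation hides. Second, and more interestingly, your explicit estimate of $\rho(t)-1=\epsilon_0\,\Delta x_E(t)$ shows that the effective perturbation of the vector field is $O(\epsilon_0^2)=O(1/E_0)$ rather than the generic $O(\epsilon_0)$, because $\Delta x_E$ starts at zero and grows only at rate $O(\epsilon_0)$; your final bound $\|x(t)-\hat x(t)\|_2 = O(1/E_0)$ is therefore strictly sharper than the $C_1/\sqrt{2E_0}$ claimed in the theorem and obtained from the off-the-shelf first-order result. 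The only caveats --- shared with the paper's own proof --- are that you implicitly need $g$ and $u$ regular enough (continuous, with $g$ locally Lipschitz in $x$) to get the integrand bound $C_0$ and uniqueness for the $x_E$-equation, hypotheses the theorem statement does not list explicitly.
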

\begin{proof}
Introduce the new coordinate $\Delta x_E=x_E-\sqrt{2E_0}$ and
define $\epsilon_0:=1/\sqrt{2E_0}$. The system
(\ref{eq:ODE_approx}) then takes the form
\begin{equation*}
\begin{aligned}
    \dot{\hat x} & = (1+\epsilon_0 \Delta x_E) f(\hat x,u), \quad \hat x(0) = x_{0}, \\
    \Delta \dot x_{E} & = \epsilon_0 g(\hat x,u)^Tu - \epsilon_0 \hat x^T f(\hat x,u), \quad \Delta x_E(0)=0.
\end{aligned}
\end{equation*}
Perturbation analysis \cite[Section~10.1]{Khalil} in the parameter
$\epsilon_0$ as $\epsilon_0 \rightarrow 0$ yields that there are
positive constants $\epsilon_1$ and $C_1$ such that $\|x-\hat
x\|_2\leq C_1|\epsilon_0|$ for all $|\epsilon_0|\leq \epsilon_1$.
The result then follows with $E_1=1/(2\epsilon_1^2)$.
\end{proof}
Just as in Section~\ref{sec:memory}, the introduced lossless
approximations are not unique. The one introduced here,
(\ref{eq:ODE_approx}), is very simple since only one extra state
$x_E$ is added. Its accuracy ($C_1$, $E_0$) of course depends on
the particular system ($f$, $g$) and the time horizon $\tau$. An
interesting topic for future work is to develop a theory for
``optimal'' lossless approximations using a fixed amount of energy
and a fixed number of states.

\subsection{Summary}
In Section~\ref{sec:micro}, we have seen that a large range of
systems, both dissipative and active, can be approximated by
lossless systems. Lossless systems account for the total energy,
and we claim these models are more physical. It was shown that
linear lossless systems are dense in the set of linear dissipative
systems. It was also shown that time reversibility of the lossless
approximation is equivalent to a reciprocal dissipative system. To
approximate active systems nonlinearity is needed. The introduced
nonlinear lossless approximation has to be initialized at a
precise state with a large total energy ($E_0$). The nonlinear
approximation achieves better accuracy (smaller $\epsilon$) by
increasing initial energy (increasing $E_0$). This is in sharp
contrast to the linear lossless approximations of dissipative
systems that are initialized with zero energy ($E_0=0$). These
achieve better accuracy (smaller $\epsilon$) by increasing the
number of states (increasing $N$). The next section deals with
uncertainties in the initial state of the lossless approximations.

\section{The Fluctuation-Dissipation Theorem}
\label{sec:fluct} As discussed in the introduction, the
fluctuation-dissipation theorem plays a major role in
close-to-equilibrium statistical mechanics. The theorem has been
stated in many different settings and for different models. See
for example \cite{Kubo,marconi+08}, where it is stated for
Hamiltonian systems and Langevin equations. In
\cite{twiss55,anderson82}, it is stated for electrical circuits. A
fairly general form of the fluctuation-dissipation theorem is
given in \cite[p.~500]{wannier}. We re-state this version of the
theorem here.

Suppose that $y_i$ and $u_i$, $i=1,\ldots,p$, are conjugate
external variables (inputs and outputs) for a dissipative system
in thermal equilibrium of temperature $T$ [Kelvin] (as defined in
Section~\ref{sec:fluctderiv}). We can interpret $y_i$ as a
generalized velocity and $u_i$ as the corresponding generalized
force, such that $y_iu_i$ is a work rate [Watt]. Although the
system is generally nonlinear, we only consider small variations
of the state around a fixpoint of the dynamics, which allows us to
assume the system to be linear. Assume first that the system has
no direct term (no memoryless element). If we make a perturbation
in the forces $u$, the velocities $y$ respond according to
\begin{equation*}
y(t) = \int_0^{t} g(t-s) u(s)ds,
\end{equation*}
where $g(t)\in\mathbb{R}^{p\times p}$ is the impulse response
matrix by definition. The following fluctuation-dissipation
theorem now says that the velocities $y$ actually also fluctuates
around the equilibrium.
\begin{proposition}
\label{prop:fluctdiss} The total response of a linear dissipative
system $G$ with no memoryless element and in thermal equilibrium
of temperature $T$ is given by
\begin{equation}
y(t) = n(t) + \int_0^{t} g(t-s) u(s)ds,
\label{eq:fluctdiss_phys}
\end{equation}
for perturbations $u$. The fluctuations $n(t)\in\mathbb{R}^p$ is a
stationary Gaussian stochastic process, where
\begin{equation}
\begin{aligned}
\mathbf{E} n(t)& =0, \\ R_n(t,s)& :=\mathbf{E} n(t)n(s)^T \\ &=
\left\{\begin{aligned}
&k_BT  g(t-s),\, t-s\geq 0\\
&k_BT  g(s-t)^T,\, t-s<0,
\end{aligned}
\right.
\end{aligned}
\label{eq:flucts}
\end{equation}
where $k_B$ is Boltzmann's constant.
\end{proposition}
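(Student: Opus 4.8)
The plan is to read off the result from the lossless-approximation machinery of Section~\ref{sec:micro}, the only new ingredient being the equilibrium (Gibbs) law on the state of the approximating lossless system. First I would invoke Theorem~\ref{thm:diss2}: the system $G$ is the linearization, about a fixpoint, of a physical system in thermal equilibrium, hence dissipative, so $\hat g(j\omega)+\hat g(-j\omega)^T\geq 0$; therefore, for any horizon $[0,\tau]$ and any tolerance there is a minimal linear lossless model $G_N$ with realization $(J,B,B^T)$, $J=-J^T$, whose impulse response $g_N(t)=B^Te^{Jt}B$ approximates $g$ in the sense of (\ref{eq:lossapprox}). The proof then amounts to asking what ``thermal equilibrium at temperature $T$'' (as defined in Section~\ref{sec:fluctderiv}) forces on the initial state of this lossless model.

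Second, I would argue that in equilibrium the state $x_0$ of $G_N$ is not known exactly but is distributed according to the Gibbs measure with density proportional to $\exp(-E(x_0)/(k_BT))=\exp(-x_0^Tx_0/(2k_BT))$; i.e.\ $x_0$ is zero-mean Gaussian with covariance $\mathbf{E}\,x_0x_0^T=k_BT\,I_n$ (equipartition: each quadratic degree of freedom carries $\tfrac12 k_BT$). Since there is no memoryless element, $D=0$, and by linearity of (\ref{eq:lossio}) the output of $G_N$ splits into a forced part and a free part driven by $x_0$,
\[
 y_N(t)=\underbrace{B^Te^{Jt}x_0}_{=:\,n_N(t)}+\int_0^t g_N(t-s)u(s)\,ds .
\]
The process $n_N$ is a linear image of a Gaussian vector, hence a zero-mean Gaussian process, and, using $J^T=-J$ so that $e^{J^Ts}=e^{-Js}$ and $e^{Jt}e^{-Js}=e^{J(t-s)}$,
\[
 \mathbf{E}\,n_N(t)n_N(s)^T=B^Te^{Jt}(k_BT I_n)e^{J^Ts}B=k_BT\,B^Te^{J(t-s)}B .
\]
For $t-s\geq 0$ this is $k_BT\,g_N(t-s)$, and for $t-s<0$ it is $k_BT\,(B^Te^{J(s-t)}B)^T=k_BT\,g_N(s-t)^T$; since the covariance depends on $t-s$ only and $n_N$ is Gaussian, $n_N$ is strictly stationary.

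Third, I would let $N\to\infty$. By Theorem~\ref{thm:diss2} the forced response converges to $\int_0^t g(t-s)u(s)\,ds$, and, since $g_N\to g$, the covariance kernel of $n_N$ converges to the kernel $R_n$ in (\ref{eq:flucts}). A Gaussian process is determined by its mean and covariance, so $n_N$ converges to a stationary zero-mean Gaussian process $n$ with covariance (\ref{eq:flucts}); this limit is well defined precisely because $R_n(t,s)$ is positive semidefinite, which is equivalent to $\hat g(j\omega)+\hat g(-j\omega)^T\geq 0$, i.e.\ to $G$ being dissipative. Passing to the limit in the displayed decomposition yields (\ref{eq:fluctdiss_phys}).

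The main obstacle is the second step: justifying that the correct prior on the state of the lossless approximation is the Gibbs/equipartition law with covariance $k_BT\,I_n$. This is the genuine physical input of the argument — the quantitative form of ``uncertainty in the initial state'' — and it must be reconciled with Proposition~\ref{prop:initial}, which says that for the \emph{deterministic} approximation problem the initial state of a minimal lossless model must vanish. The resolution is that we no longer demand $y_N\equiv 0$ when $u\equiv 0$; instead the unmodeled thermal energy in $x_0$ reappears as the small fluctuation $n_N$, whose variance $k_BT\,g_N(0{+})$ stays bounded exactly because $G$ has no memoryless element (the white-noise case being deferred to Proposition~\ref{prop:johnsonnoise}). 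A secondary technical point is making the $N\to\infty$ limit rigorous: one needs the convergence $g_N\to g$ from Theorem~\ref{thm:diss2} to be strong enough (locally uniform, or in $L^1_{\mathrm{loc}}$) to pass to the limit simultaneously in the forced response and in the covariance kernel, and to ensure tightness of the laws of $n_N$.
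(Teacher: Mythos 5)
Your proposal is correct and follows essentially the same route as the paper's own derivation in Section~\ref{sec:fluctderiv}: take the linear lossless approximation $(J,B,B^T)$ of $G$ from Theorem~\ref{thm:diss2}, impose the Gibbs/equipartition law $X_0=k_BT\,I_n$ on the initial state (Definition~\ref{def:temp} and Proposition~\ref{prop:temp}), and identify the transient $B^Te^{Jt}x_0$ with $n(t)$ so that $R_y(t,s)=k_BT\,B^Te^{J(t-s)}B$ gives (\ref{eq:flucts}). Your added care about the $N\to\infty$ limit and the reconciliation with Proposition~\ref{prop:initial} goes slightly beyond what the paper writes out, but the substance of the argument is the same.
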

\begin{proof}
See Section~\ref{sec:fluctderiv}.
\end{proof}

The covariance function of the noise $n$ is determined by the
impulse response $g$, and vice versa. The result has found
wide-spread use in for example fluid mechanics: By empirical
estimation of the covariance function we can estimate how the
system responds to external forces. In circuit theory, the result
is often used in the other direction: The forced response
determines the color of the inherent thermal noise. One way of
understanding the fluctuation-dissipation theorem is by using
linear lossless approximations of dissipative models, as seen in
the next subsection.

We may also express (\ref{eq:fluctdiss_phys}) in state space form
in the following way. A dissipative system with no direct term can
always be written as \cite[Theorem~3]{willems72B}:
\begin{equation}
\begin{aligned}
  \dot x(t) & = (J - K) x(t) + Bu(t),  \\
  y(t) & = B^T x(t),
\end{aligned}
\label{eq:dissipsystem}
\end{equation}
where $K=K^T$ is positive semidefinite and $J$ anti symmetric. To
account for (\ref{eq:fluctdiss_phys})--(\ref{eq:flucts}), it
suffices to introduce a white noise term $v(t)$ in
(\ref{eq:dissipsystem}) in the following way,
\begin{equation}
\begin{aligned}
  \dot x(t) & = (J - K) x(t) + Bu(t) + \sqrt{2k_BT} L v(t),  \\
  y(t) & = B^T x(t),
\end{aligned}
\label{eq:Langevin}
\end{equation}
where the matrix $L$ is chosen such that $LL^T=K$. Equation
(\ref{eq:Langevin}) is the called the Langevin equation of the
dissipative system.

Dissipative systems with memoryless elements are of great
practical significance. Proposition~\ref{prop:fluctdiss} needs to
be slightly modified for such systems.
\begin{proposition}
\label{prop:johnsonnoise} The total response of a linear
dissipative memoryless system in thermal equilibrium of
temperature $T$ and for perturbations $u$ is given by
\begin{equation}
y(t) = n(t) + ku(t) = n(t)+k_s u(t) + k_a u(t),
\label{eq:johnsonnoise}
\end{equation}
where $k_s\geq 0$ is symmetric positive semidefinite, and $k_a$
anti symmetric. The fluctuations $n(t)\in\mathbb{R}^p$ is a white
Gaussian stochastic process, where
\begin{equation*}
\begin{aligned}
\mathbf{E} n(t)& =0, \\
R_n(t,s)& :=\mathbf{E} n(t)n(s)^T = 2k_B T k_s   \delta(t-s).
\end{aligned}
\end{equation*}
\end{proposition}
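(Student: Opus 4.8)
The plan is to mimic the derivation of Proposition~\ref{prop:fluctdiss} (to be given in Section~\ref{sec:fluctderiv}), but now starting from the lossless \emph{linear} approximation $K_N$ of the memoryless dissipative element constructed in Section~\ref{sec:staticio}, rather than from a general dissipative system with memory. First I would recall that the antisymmetric part $k_a$ contributes a direct term $D=k_a$ that does no work and carries no fluctuation, so it passes through unchanged into (\ref{eq:johnsonnoise}); the whole analysis therefore reduces to the symmetric positive semidefinite part $k_s=k_f^Tk_f$. For the latter I would invoke the realization $(J_N,\sqrt{2}B_N,\sqrt{2}B_N^T)$ of $K_N$ from (\ref{eq:harmonics})--(\ref{eq:kapprox}): this is a bank of $N-1$ undamped oscillators at frequencies $l\omega_0$, $l=1,\dots,N-1$ (plus the $l=0$ mode), all coupled to the port through the same $k_f$.

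The key physical input is the equipartition/thermal-equilibrium assumption: in thermal equilibrium at temperature $T$ the lossless approximation $K_N$ has a random initial state $x_0$ whose distribution is the Gibbs distribution associated with the Hamiltonian $E(x)=\tfrac12 x^Tx$, i.e.\ $x_0$ is zero-mean Gaussian with covariance $\mathbf{E}\,x_0x_0^T = k_BT\,I$ (each quadratic degree of freedom carries energy $\tfrac12 k_BT$). The free response $n_N(t) := \sqrt{2}\,B_N^T e^{J_N t} x_0$ is then a zero-mean Gaussian process, and its covariance is computed directly:
\[
R_{n_N}(t,s) = \mathbf{E}\,n_N(t)n_N(s)^T = 2k_BT\, B_N^T e^{J_N(t-s)} B_N = k_BT\cdot 2\kappa_N^c(t-s)\big|_{\text{sym.\ continued}},
\]
since $B_N^T e^{J_N t} B_N$ is exactly the impulse response $\kappa_N^c$ of the (unscaled) realization $(J_N,B_N,B_N^T)$. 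I would then let $N\to\infty$: by the distributional convergence established around (\ref{eq:kdistr})--(\ref{eq:kapprox}), $2\kappa_N^c(t-s)\to k_s\delta(t-s)$ in the sense of distributions, so $R_{n_N}\to 2k_BT\,k_s\,\delta(t-s)$, giving the claimed white-noise covariance. Superposing the forced response $(K_N u)(t)+k_a u(t)\to k_s u(t)+k_a u(t)$ (Theorem~\ref{prop:errbound}) with the free response yields (\ref{eq:johnsonnoise}) in the limit, and Gaussianity of $n$ is inherited from Gaussianity of $x_0$ at every finite $N$.

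The main obstacle I anticipate is making the limit $N\to\infty$ rigorous at the level of the \emph{stochastic process} rather than merely its second moments: one must argue that the finite-$N$ Gaussian processes $n_N$ converge (say, in distribution on an appropriate test-function space, or as generalized processes) to a well-defined white-noise limit $n$, and that this limit is independent of the particular Fourier construction used — exactly the non-uniqueness caveat noted after Theorem~\ref{prop:errbound}. A clean way around this is to phrase the conclusion, as the proposition does, in terms of $\mathbf{E}\,n(t)=0$ and the (distributional) covariance $R_n(t,s)=2k_BT k_s\delta(t-s)$ together with Gaussianity, which are precisely the quantities that pass to the limit; the delta function should be interpreted weakly, consistent with the convention already adopted for $\kappa(t)=k_s\delta(t)$. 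A secondary point to check is consistency with Proposition~\ref{prop:fluctdiss}: formally substituting a memoryless $g(t)=k_s\delta(t)$ into (\ref{eq:flucts}) and symmetrizing the two cases $t-s\gtrless 0$ reproduces $R_n=2k_BT k_s\delta(t-s)$, the factor $2$ arising exactly from collapsing the causal and anti-causal halves — the same factor-of-two scaling that appears in (\ref{eq:kapprox}) — which I would point out to reassure the reader that the two propositions are the same statement in two regimes.
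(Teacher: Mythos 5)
Your proposal is correct and follows essentially the same route as the paper: the paper's stated proof is the short reduction to Proposition~\ref{prop:fluctdiss} with the re-normalized impulse response $g(t)=2k_s\delta(t)$ (the factor of two from the half-cut impulse at $s=t$), which you include as your final consistency check, while your main argument---equipartition of $x_0$ over the realization $(J_N,\sqrt{2}B_N,\sqrt{2}B_N^T)$ followed by the distributional limit $2\kappa_N^c\to k_s\delta$---is exactly the direct derivation the paper sketches in Section~\ref{sec:fluctderiv}. Your covariance computation and the handling of $k_a$ via the direct term $D$ both match the paper.
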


Proposition~\ref{prop:johnsonnoise} follows from
Proposition~\ref{prop:fluctdiss} if one extracts the dissipative
term $k_su(t)$ from the memoryless model $ku(t)$ and puts
$g(t)=k_s\delta(t)$. However, the integral in
(\ref{eq:fluctdiss_phys}) runs up to $s=t$ and cuts the impulse
$\delta(t)$ in half. The re-normalized impulse response of the
dissipative term is therefore given by $g(t)=2k_s\delta(t)$ (see
also Section~\ref{sec:staticio}). The result then follows using
this $g(t)$ by application of Proposition~\ref{prop:fluctdiss}.
One explanation for why the anti symmetric term $k_a$ can be
removed from $g(t)$ is that it can be realized exactly using the
direct term $D$ in linear lossless approximation
(\ref{eq:physlinear}). An application of
Proposition~\ref{prop:johnsonnoise} gives the Johnson-Nyquist
noise of a resistor.
\begin{example}
As first shown theoretically in \cite{nyquist28} and
experimentally in \cite{johnson28}, a resistor $R$ of temperature
$T$ generates white noise. The total voltage over the resistor,
$v(t)$, satisfies $v(t)=Ri(t)+n(t)$, $\mathbf{E}n(t)n(s) =
2k_BTR\delta(t-s)$, where $i(t)$ is the current.
\end{example}

\subsection{Derivation using linear lossless approximations}
\label{sec:fluctderiv} Let us first consider systems without
memoryless elements. The general solution to the linear lossless
system (\ref{eq:physlinear}) is then
\begin{equation}
  y(t) = B^Te^{Jt}x_0 + \int_{0}^t B^Te^{J(t-s)}Bu(s)ds,
\label{eq:gensol}
\end{equation}
where $x_0$ is the initial state. It is the second term, the
convolution, that approximates the dissipative $(Gu)(t)$ in the
previous section. In Proposition~\ref{prop:initial}, we showed
that the first transient term is not desired in the approximation.
Theorems~\ref{prop:errbound} and \ref{thm:diss2} suggest that we
will need a system of extremely high order to approximate a linear
dissipative system on a reasonably long time horizon. When dealing
with systems of such high dimensions, it is reasonable to assume
that the exact initial state $x_0$ is not known, and it can be
hard to enforce $x_0=0$. Therefore, let us take a statistical
approach to study its influence. We have that
\begin{equation*}
  \mathbf{E}y(t) = B^Te^{Jt}\mathbf{E}x_0 + \int_{0}^t B^Te^{J(t-s)}Bu(s)ds,\quad t\geq 0,
\end{equation*}
if the input $u(t)$ is deterministic and $\mathbf{E}$ is the
expectation operator. The autocovariance function $R_y$ for $y(t)$
is then
\begin{equation}
\begin{aligned}
  R_{y}(t,s)&:= \mathbf{E}[y(t)-\mathbf E y(t)][y(s)-\mathbf E y(s)]^T \\ &= B^Te^{Jt}X_0 e^{-Js}B,
\end{aligned}
\label{eq:resnoise}
\end{equation}
where $X_0$ is the covariance of the initial state,
\begin{equation}
X_0 := \mathbf{E}\Delta x_0 \Delta x_0^T,
\label{eq:covariance}
\end{equation}
where $\Delta x_0:= x_0-\mathbf Ex_0$ is the stochastic uncertain
component of the initial state, which evolves as $\Delta x(t) =
e^{Jt}\Delta x_0$. The positive semidefinite matrix $X_0$ can be
interpreted as a measure of how well the initial state is known.
For a lossless system with total energy $E(x)=\frac{1}{2}x^Tx$ we
define the \emph{internal energy} as
\begin{equation}
U(x):= \frac{1}{2} \Delta x^T \Delta x, \quad \Delta x:= x - \mathbf{E}x.
\label{eq:internalenergy2}
\end{equation}
The expected total energy of the system equals $\mathbf{E}E(x)=
\frac{1}{2}(\mathbf{E}x)^T\mathbf{E}x + \mathbf{E}U(x)$. Hence the
internal energy captures the stochastic part of the total energy,
see also \cite{san07a,san07d}. In statistical mechanics, see
\cite{wannier,kittel+80,ma85}, the temperature of a system is
defined using the internal energy.
\begin{definition}[Temperature]
\label{def:temp} A system with internal energy $U(x)$ [Joule] has
temperature $T$ [Kelvin] if, and only if, its state $x$ belongs to
Gibbs's distribution with probability density function
\begin{equation}
p(x) = \frac{1}{Z}\exp[-U(x)/k_BT],
\label{eq:Gibbs}
\end{equation}
where $k_B$ is Boltzmann's constant and $Z$ is the normalizing
constant called the partition function. A system with temperature
is said to be at thermal equilibrium.
\end{definition}

When the internal energy function is quadratic and the system is
at thermal equilibrium, it is well known that the uncertain energy
is equipartitioned between the states, see
\cite[Sec.~4-5]{wannier}.
\begin{proposition}
\label{prop:temp}
 Suppose a lossless system with internal energy function
 $U(x)=\frac{1}{2}\Delta x^T \Delta x$ has temperature $T$
 at time $t=0$. Then the initial state $x_0$
 belongs to a Gaussian distribution with covariance matrix $X_0= k_BTI_n$, and
 $\mathbf{E}U(x_0)=\frac{n}{2}k_BT$.
\end{proposition}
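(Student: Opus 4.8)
The plan is to simply unwind Definition~\ref{def:temp}. Saying the lossless system has temperature $T$ at $t=0$ means, by definition, that the initial state $x_0$ is a random vector whose density is the Gibbs density $p(x)=\frac{1}{Z}\exp[-U(x)/(k_BT)]$ with the quadratic internal energy $U(x)=\frac12\Delta x^T\Delta x$, $\Delta x:=x-\mathbf{E}x$. Substituting this $U$ gives $p(x)\propto \exp\!\big[-\tfrac{1}{2k_BT}\norm{x-\mathbf{E}x}^2\big]$, so the first step is to recognize this as the density of a multivariate normal distribution.

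Since the exponent splits as $\sum_{i=1}^n (x_i-(\mathbf{E}x)_i)^2/(2k_BT)$, the coordinates of $x_0$ are independent one-dimensional Gaussians, each with mean $(\mathbf{E}x)_i$ and variance $k_BT$; equivalently $x_0$ is Gaussian with mean $\mathbf{E}x_0$ and covariance $k_BTI_n$, and the partition function is $Z=(2\pi k_BT)^{n/2}$. In particular the covariance of the initial state, defined in (\ref{eq:covariance}) as $X_0:=\mathbf{E}\Delta x_0\Delta x_0^T$, equals $k_BTI_n$, which is automatically positive definite. I would also note there is no circularity here: $\mathbf{E}\Delta x_0=0$ by construction, so the mean $\mathbf{E}x$ appearing inside the Gibbs density genuinely is the mean of the resulting distribution.

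For the energy statement, I would compute the expected internal energy using linearity of expectation together with the identity $\mathbf{E}[z^Tz]=\trace\mathbf{E}[zz^T]$: $\mathbf{E}U(x_0)=\tfrac12\mathbf{E}[\Delta x_0^T\Delta x_0]=\tfrac12\trace\big(\mathbf{E}\Delta x_0\Delta x_0^T\big)=\tfrac12\trace X_0=\tfrac12\trace(k_BTI_n)=\tfrac{n}{2}k_BT$. This is precisely the equipartition statement: each of the $n$ states carries $\tfrac12 k_BT$ of internal energy on average.

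There is no genuinely hard step in this proposition; the only points that require a little care are getting the normalization of the Gaussian right so that the per-coordinate variance comes out as exactly $k_BT$ (and not, say, $k_BT/2$) despite the factor $\tfrac12$ in $U$, and correctly reducing the expectation of the quadratic form to a trace. Everything else is routine bookkeeping.
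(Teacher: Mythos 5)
Your proof is correct: unwinding Definition~\ref{def:temp} with the quadratic $U$, identifying the Gibbs density as a Gaussian with covariance $k_BTI_n$, and computing $\mathbf{E}U(x_0)=\tfrac12\trace X_0=\tfrac{n}{2}k_BT$ is exactly the standard equipartition argument. The paper itself states this proposition without proof, simply citing Wannier, so your write-up supplies precisely the routine details the authors omit, including the useful observation that the self-referential appearance of $\mathbf{E}x$ in the Gibbs density is consistent.
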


Hence, the temperature $T$ is proportional to how much uncertain
equipartitioned energy there is per degree of freedom in the
lossless system. There are many arguments in the physics and
information theory literature for adopting the above definition of
temperature. For example, Gibbs's distribution maximizes the
Shannon continuous entropy (principle of maximum entropy
\cite{Jaynes57A,Jaynes57B}). In this paper, we will simply accept
this common definition of temperature, although it is interesting
to investigate more general definitions of temperature of
dynamical systems.
\begin{remark}
Note that lossless systems may have a temperature at any time
instant, not only at $t=0$. For instance, a lossless linear system
(\ref{eq:gensol}) of temperature $T$ at $t=0$ that is driven by a
deterministic input remains at the same temperature and has
constant internal energy at all times, since $\Delta x(t)$ is
independent of $u(t)$. To change the internal energy using
deterministic inputs, nonlinear systems are needed as explained in
\cite{Brockett+78,Brockett99}. For the related issue of entropy
for dynamical systems, see \cite{Brockett+78,san07a}.
\end{remark}

If a lossless linear system (\ref{eq:gensol}) has temperature $T$
at $t=0$ as defined in Definition~\ref{def:temp} and
Proposition~\ref{prop:temp}, then the autocovariance function
(\ref{eq:resnoise}) takes the form
\begin{equation*}
  R_{y}(t,s) = k_BT \cdot B^Te^{J(t-s)}B = k_BT \cdot [B^Te^{J(s-t)}B]^T,
\end{equation*}
since $J^T=-J$. It is seen that linear lossless systems satisfy
the fluctuation-dissipation theorem
(Proposition~\ref{prop:fluctdiss}) if we identify the stochastic
transient in (\ref{eq:gensol}) with the fluctuation, i.e.
$n(t)=B^T e^{Jt}x_0$ (assuming $\mathbf{E}x_0=0$), and the impulse
response as $g(t)=B^Te^{Jt}B$. In particular, $n(t)$ is a Gaussian
process of mean zero because $x_0$ is Gaussian and has mean zero.

Theorem~\ref{thm:diss2} showed that dissipative systems with
memory can be arbitrarily well approximated by lossless systems.
Hence we cannot distinguish between the two using only
input-output experiments. One reason for preferring the lossless
model is that its transient also explains the thermal noise that
is predicted by the fluctuation-dissipation theorem. To explain
the fluctuation-dissipation theorem for systems without memory
(Proposition~\ref{prop:johnsonnoise}), one can repeat the above
arguments by making a lossless approximation of $k_s$ (see
Theorem~\ref{prop:errbound}). The anti symmetric part $k_a$ does
not need to be approximated but can be included directly in the
lossless system by using the anti symmetric direct term $D$ in
(\ref{eq:lossapprox}).

Proposition~\ref{prop:johnsonnoise} captures the notion of a
\emph{heat bath}, modelling it (as described in
Theorem~\ref{prop:errbound}) with a lossless system so large that
for moderate inputs and within  the chosen time horizon, the
interaction with its environment is not significantly affected.

That the Langevin equation (\ref{eq:Langevin}) is a valid
state-space model for (\ref{eq:fluctdiss_phys}) is shown by a
direct calculation. If we assume that (\ref{eq:dissipsystem}) is a
low-order approximation for a high-order linear lossless system
(\ref{eq:gensol}), in the sense of Theorem \ref{thm:diss2}, it is
enough to require that both systems are at thermal equilibrium
with the same temperature $T$ in order to be described by the same
stochastic equation (\ref{eq:fluctdiss_phys}), at least in the
time interval in which the approximation is valid.

\subsection{Nonlinear lossless approximations and thermal noise}
Lossless approximations are not unique. We showed in
Section~\ref{sec:nonlinapprox} that low-order nonlinear lossless
approximations can be constructed. As seen next, these do
\emph{not} satisfy the fluctuation-dissipation theorem. This is
not surprising since they can also model active systems.  If they
are used to implement linear dissipative systems, the linearized
form is not in the form (\ref{eq:physlinear}). By studying the
thermal noise of a system, it could in principle be possible to
determine what type of lossless approximation that is used.

Consider the nonlinear lossless approximation
(\ref{eq:nonlin_approx}) of $y(t)=ku(t)$, where $k$ is scalar and
can be either positive or negative. The approximation only works
well when the initial total energy $E_0$ is large. To study the
effect of thermal noise, we add a random Gaussian perturbation
$\Delta x_0$ to the initial state so that the system has
temperature $T$ at $t=0$ according to Definition~\ref{def:temp}
and Proposition~\ref{prop:temp}. This gives the system
\begin{equation}
  \begin{aligned}
  \dot x_E(t) & = \frac{k}{\sqrt{2E_0}} u(t)^2, \quad x_E(0) = \sqrt{2E_0}+\Delta x_0,\,\mathbf{E} \Delta x_0  = 0, \\
  y_E(t) & = \frac{k}{\sqrt{2E_0}}x_E(t)u(t), \quad  \mathbf{E}\Delta x_0^2 = k_BT.
  \end{aligned}
\label{eq:nonlin_approx_temp}
\end{equation}
The solution to the lossless approximation
(\ref{eq:nonlin_approx_temp}) is given by
\begin{equation}
    y_E(t)  = k u(t) + n_s(t) + n_d(t),
\label{eq:nonlinenoisedef0}
\end{equation}
where
\begin{equation}
n_d(t)=\frac{k^2}{2E_0}u(t)\int_0^tu(s)^2ds, \quad n_s(t)=\frac{k\Delta x_0}{\sqrt{2E_0}}u(t).
\label{eq:nonlinenoisedef}
\end{equation}
We call $n_d(t)$ the deterministic implementation noise and
$n_s(t)$ the stochastic thermal noise. The ratio between the
deterministic and stochastic noise is
\begin{equation*}
\frac{n_d(t)}{n_s(t)}  =  \frac{k}{\sqrt{2E_0}\Delta x_0} \int_0^t u(s)^2 ds = \frac{ku(0)^2}{\sqrt{2E_0}\Delta x_0} t + O(t^2),
\end{equation*}
as $t\rightarrow 0$, if $u(t)$ is continuous. Hence, for
sufficiently small times $t$ and if $\Delta x_0\neq 0$, the
stochastic noise $n_s(t)$ is the dominating noise in the lossless
approximation (\ref{eq:nonlin_approx_temp}). Since $\Delta x_0$
belongs to a Gaussian distribution, there is zero probability that
$\Delta x_0 = 0$. Hence, the solution $y_E(t)$  can be written
\begin{equation}
\begin{aligned}
    y_E(t)  &= k u(t) + n_s(t) + O(t),\\
    \mathbf{E}n_s(t) &=0,\quad \mathbf{E}n_s(t)^2 = \frac{k^2k_B T}{2E_0}u(t)^2.
    \end{aligned}
\label{eq:nonlin_approx_temp_sol}
\end{equation}
Just as in Proposition~\ref{prop:johnsonnoise}, the noise
variance is proportional to the temperature $T$. Notice, however,
that the noise is significantly smaller in
(\ref{eq:nonlin_approx_temp_sol}) than in
Proposition~\ref{prop:johnsonnoise}. There the noise is white and
unbounded for each $t$. The expression
(\ref{eq:nonlin_approx_temp_sol}) is further used in
Section~\ref{sec:hardlimit}.

\subsection{Summary}
In Section~\ref{sec:fluct}, we have seen that uncertainty in the
initial state of a linear lossless approximation gives a simple
explanation for the fluctuation-dissipation theorem. We have also
seen seen that uncertainty in the initial state of a nonlinear
lossless approximation gives rise to noise which does not satisfy
the fluctuation-dissipation theorem. In all cases, the variance of
the noise is proportional to the temperature of the system. Only
when the initial state is perfectly know, that is when the system
has temperature zero, perfect approximation using lossless systems
can be achieved.

\section{Limits on Measurements and Back Action}
\label{sec:hardlimit} In this section, we study measurement
strategies and devices using the developed theory. In quantum
mechanics, the problem of measurements and their interpretation
have been much studied and debated. Also in classical physics
there have been studies on limits on measurement accuracy. Two
examples are \cite{barnes+34,mccombie53}, where thermal noise in
measurement devices is analyzed and bounds on possible measurement
accuracy derived. Nevertheless, the effect of the measurement
device on the measured system, the ``back action", is usually
neglected in classical physics. That such effects exist also in
classical physics is well known, however, and is called the
``observer effect". Also in control engineering these effects are
usually neglected: The sensor is normally modeled to interact with
the controlled plant only through the feedback controller.

Using the theory developed in this paper, we will quantify and
give limits on observer effects in a fairly general setting. These
limitations should be of practical importance for control systems
on the small physical scale, such as for MEMS and in systems
biology.

\subsection{Measurement problem formulation}
Assume that the problem is to estimate the scalar potential
$y(t_m)$ (an output) of a linear dissipative dynamical system
$\mathcal{S}$ at some time $t_m>0$. Furthermore, assume that the
conjugate variable of $y$ is $u$ (the ``flow" variable). Then the
product $y(t)u(t)$ is a work rate. As has been shown in
Section~\ref{sec:memory}, all single-input--single-output linear
dissipative systems can be arbitrarily well approximated by a
dynamical system in the form,
\begin{equation}
\mathcal{S}: \quad \left\{
\begin{aligned}
    \dot x(t) & = Jx(t) + Bu(t), & x(0)& =x_0, \\
    y(t) & = B^T x(t), &  y(0)&=y_0 = B^Tx_0,
  \end{aligned}
\right.
\label{eq:unpert}
\end{equation}
for a fixed initial state $x_0$. Note that this system evolves
deterministically since $x_0$ is fixed. Let us also define the
parameter $C$ by $B^TB =: 1/C$. Then $1/C$ is the first Markov
parameter of the transfer function of $\mathcal{S}$. If
$\mathcal{S}$ is an electrical capacitor and the measured quantity
a voltage, $C$ coincides with the capacitance.

\begin{figure}[tb]
\centering
\psfrag{tm}[][]{$[0,t_m]$}
\psfrag{S}[][]{$\mathcal{S}$}
\psfrag{R}[][]{$\mathcal{M}$}
\psfrag{u}[][]{$u(t)$}
\psfrag{um}[][]{$u_m(t)$}
\psfrag{y}[][]{$y(t)$}
\includegraphics[width=0.6\hsize]{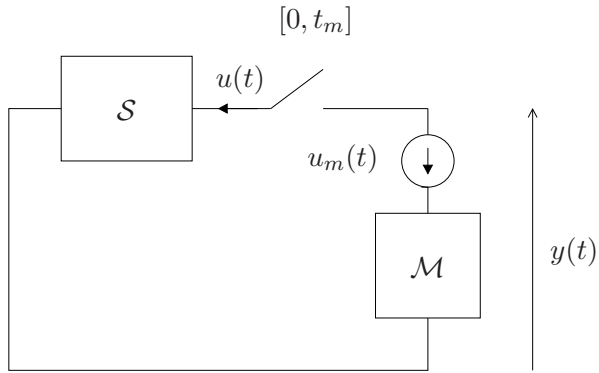}
\caption{Circuit diagram of an idealized measurement device $\mathcal{M}$ and the measured system $\mathcal{S}$. The
measurement is performed in the time interval $[0,t_m]$. The problem is to estimate the potential $y(t_m)$ as well as possible, given the
flow measurement $u_m=-u$.}
\label{fig:measideal}
\end{figure}
To estimate the potential $y(t_m)$, an idealized measurement
device called $\mathcal{M}$ is connected to $\mathcal{S}$ in the
time interval $[0,t_m]$, see Fig.~\ref{fig:measideal}. The
validity of Kirchoff's laws is assumed in the interconnection.
That is, the flow out of $\mathcal{S}$ goes into $\mathcal{M}$,
and the potential difference $y(t)$ over the devices is the same
(a lossless interconnection). The device $\mathcal{M}$ has an
ideal flow meter that gives the scalar value $u_m(t)=-u(t)$.
Therefore the problem is to estimate the potential of
$\mathcal{S}$ given knowledge of the flow $u(t)$. For this
problem, two related effects are studied next, the \emph{back
action} $b(t_m)$, and the \emph{estimation error} $e(t_m)$. By
back action we mean how the interconnection with $\mathcal{M}$
effects the state of $\mathcal{S}$. It quantifies how much the
state of $\mathcal{S}$ deviates from its natural trajectory after
the measurement. Estimation error is the difference between the
actual potential and the estimated potential. Next we consider two
measurement strategies and their lossless approximations in order
to study the impact of physical implementation.
\begin{figure}[tb]
\centering
\psfrag{tm}[][]{$[0,t_m]$}
\psfrag{S}[][]{$\mathcal{S}$}
\psfrag{R}[][]{$k_m$}
\psfrag{-R}[][]{$-k_m$}
\psfrag{u}[][]{$u(t)$}
\psfrag{um}[][]{$u_m(t)$}
\psfrag{y}[][]{$y(t)$}
\includegraphics[width=1.0\hsize]{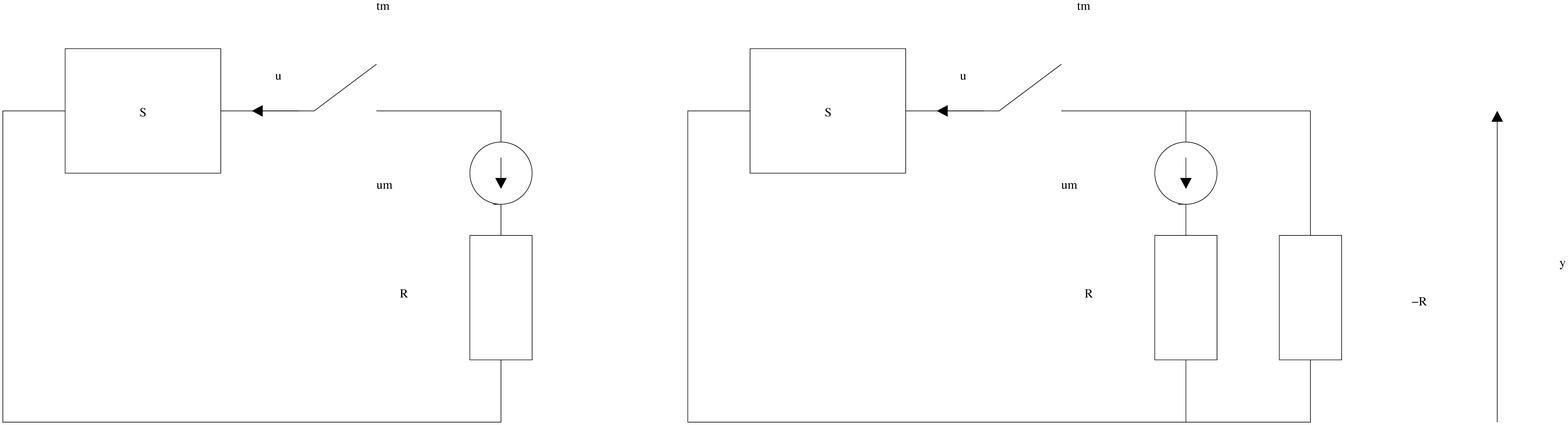}
\caption{Circuit diagrams of the memoryless dissipative measurement device $\mathcal{M}_1$ (left) and
the memoryless active measurement device $\mathcal{M}_2$ (right).}
\label{fig:meascirc}
\end{figure}

\begin{remark}
\label{rem:measass} The reason the initial state $x_0$ in
$\mathcal{S}$ is fixed is that we want to compare how different
measurement strategies succeed when used on exactly the same
system. We also assume that $y_0=B^Tx_0$ is  completely unknown to
the measurement device before the measurement starts.
\end{remark}

\subsection{Memoryless dissipative measurement device}
\label{sec:dissmeas} Consider the measurement device
$\mathcal{M}_1$ to the left in Fig.~\ref{fig:meascirc}. This
measurement device connected to $\mathcal{S}$ is modeled by a
memoryless system with (a known) admittance $k_m>0$,
\begin{equation*}
  \mathcal{M}_1:\left\{
\begin{aligned}
 u_m(t)  & = -u(t) = k_m y(t) \\
 y_m(t) & = \frac{u_m(t)}{k_m} = y(t). \\
\end{aligned}
\right.
\end{equation*}
The signal $y_m(t)$ is the measurement signal produced by
$\mathcal{M}_1$.
 The dynamics of the interconnected measured system becomes
\begin{equation*}
  \mathcal{SM}_1: \left\{
\begin{aligned}
   \dot x_1(t) & = (J-k_mBB^T)x_1(t), \quad x_1(0)=x_0, \\
   y_1(t)& = B^T x_1(t),
    \end{aligned} \right.
\end{equation*}
where $x_1(t)$ is the state of $\mathcal{S}$ when it is
interconnected to $\mathcal{M}_1$. If the measurement circuit is
closed in the time interval $[0,t_m]$, then the state of the
system $\mathcal{S}$ gets perturbed from its natural trajectory by
a quantity
\begin{align*}
  b(t_m)  & := x_1(t_m)-x(t_m) = e^{(J-k_mBB^T)t_m}x_0 -  e^{Jt_m}x_0  \\ & = - k_m y_0 B t_m + O(t_m^2),
\end{align*}
where $x(t)$ satisfies (\ref{eq:unpert}) with $u(t)=0$, and
$b(t_m)$ is the back action. By making the measurement time $t_m$
small, the back action can be made arbitrarily small.

In this situation, a good estimation policy for the potential
$y_1(t_m)$ is to choose $\hat y(t_m)=y_m(t_m)$, since the
estimation error $e(t_m)$ is identically zero in this case,
\begin{equation*}
  e(t_m):= \hat y(t_m)- y_1(t_m)= 0.
\end{equation*}
The signal $\hat y(t_m)$ should here, and in the following, be
interpreted as the best possible estimate of the potential of
$\mathcal S$ for someone who has access to the measurement signal
$y_m(t)$, $0\leq t \leq t_m$. Note that the estimation error $e$
is defined with respect to the perturbed system $\mathcal{SM}_1$.
Given that we already have defined back action it is easy to give
a relation to the unperturbed system $\mathcal{S}$ by
\begin{equation}
y(t_m) = \hat y(t_m) - e(t_m) - B^Tb(t_m),
\label{eq:correction}
\end{equation}
which is valid for non-zero estimation errors also.
\begin{remark}
Whether one is interested in the perturbed potential $y_1(t_m)$ or
the unperturbed potential $y(t_m)$ of $\mathcal{S}$ depends on the
reason for the measurement. For a control engineer who wants to
act on the measured system,  $y_1(t_m)$ is likely to be of most
interest. A physicist, on the other hand, who is curious about the
uncontrolled system may be more interested in $y(t_m)$. Either
way, knowing the back action $b$, one can always get $y(t_m)$ from
$y_1(t_m)$ using (\ref{eq:correction}).
\end{remark}

\subsubsection{Lossless realization $\hat{\mathcal{M}}_1$}
\label{sec:lossessM1} Next we make a linear lossless realization
of the admittance $k_m>0$ in $\mathcal{M}_1$, using
Proposition~\ref{prop:johnsonnoise}, so that it satisfies the
fluctuation-dissipation theorem. Linear physical implementations
of $\mathcal{M}_1$ inevitably exhibit this type of Johnson-Nyquist
noise. We obtain
\begin{equation*}
  \hat{\mathcal M}_{1}:\left\{
\begin{aligned}
 u_m(t)  & = -u(t) = k_m y(t) + \sqrt{2k_mk_BT_m}n(t), \\
 y_m(t) & = \frac{u_m(t)}{k_m} = y(t) + \sqrt{\frac{2k_BT_m}{k_m}}n(t), \\
\end{aligned}
\right.
\end{equation*}
where $T_m$ is the temperature of the measurement device, and
$n(t)$ is unit-intensity white noise. As shown before, the noise
can be interpreted as due to our ignorance of the exact initial
state of the measurement device. The interconnected measured
system $\mathcal{S} \hat{\mathcal M}_{1}$ satisfies a
Langevin-type equation,
\begin{equation*}
  \mathcal{S} \hat{\mathcal M}_{1}: \left\{
\begin{aligned}
  \dot x_{1}(t) & = (J-k_mBB^T)x_{1}(t) - \sqrt{2k_m k_BT_m}Bn(t), \\ x_{1}(0)& =x_0, \\
y_1(t) & = B^T x_1(t).
\end{aligned}
 \right.
\end{equation*}
The solution for $\mathcal{S}\hat {\mathcal M}_{1}$ is
\begin{multline*}
x_{1}(t)  = e^{(J-k_m B B^T )t} x_0 \\ - \int_0^t e^{(J-k_m B
B^T)(t-s)} B \sqrt{2k_mk_BT_m} n(s) ds.
\end{multline*}
The back action can be calculated as
\begin{align*}
b(t_m) & = x_1(t_m) - x(t_m)= b_d(t_m) + b_s(t_m) , \\
b_d(t_m) & := \mathbf{E} x_{1}(t_m)-x(t_m) = e^{(J-k_mBB^T)t_m}x_0 -  e^{Jt_m}x_0  \\ & = - k_m  y_0 B t_m + O(t_m^2), \\
b_s(t_m)  & := x_{1}(t_m) - \mathbf{E} x_{1}(t_m) \\ & = - \int_0^{t_m} e^{(J-k_m BB^T)(t_m-s)} B \sqrt{2k_mk_BT_m} n(s) ds,
\end{align*}
where we have split the back action into deterministic and
stochastic parts. The deterministic back action coincides with
the back action for $\mathcal{M}_1$. The stochastic back action
comes from the uncertainty in the lossless realization of the
measurement device. The measurement device $\hat{\mathcal{M}}_1$
injects a stochastic perturbation into the measured system
$\mathcal{S}$.

The covariance $P$ of the back action $b$ at time $t_m$ is
\begin{multline} P(t_m)  := \mathbf{E}
[b(t_m)-\mathbf{E}b(t_m)][b(t_m)-\mathbf{E}b(t_m)]^T \\ =
\mathbf{E} b_s(t_m) b_s(t_m)^T    = 2k_mk_BT_m \int_0^{t_m}
e^{(J-k_m BB^T)(t_m-s)}B\\ \times B^T (e^{(J-k_m BB^T)(t_m-s)})^T
ds   = 2BB^T k_mk_B T_m t_m + O(t_m^2).\label{eq:covmeas}
\end{multline}
It holds that $P(t_m)\rightarrow k_B T_m  I_n$ and
$\mathbf{E}x_1(t) \rightarrow 0$ as $t_m \rightarrow \infty$, see
\cite[Propositions~1 and 2]{san07d}, and the measured system
attains temperature $T_m$ after an infinitely long measurement. It
is therefore reasonable to keep $t_m$ small if one wants to have a
small back action.

Next we analyze and bound the estimation error. The measurement
equation is given by
\begin{align*}
    y_m(t) & = \frac{u_m(t)}{k_m} = y_{1}(t) + \sqrt{\frac{2k_B T_m}{k_m}}n(t).
\end{align*}
Note that $\hat y(t_m) = y_m(t_m)$ is now a poor estimator of
$y_1(t_m)$, since the variance of the estimation error $e(t)=\hat
y(t)-y_1(t)$ is infinite due to the white noise $n(t)$. Using
filtering theory, we can construct an optimal estimator that
achieves a fundamental lower bound on the possible accuracy
(minimum variance) given $y_{m}(t)$ in the interval $0\leq t\leq
t_m$. The solution is the Kalman filter,
\begin{equation}
\begin{aligned}
   \dot{\hat x}_{1}(t) & = (J-k_mBB^T)\hat x_{1}(t) + K(t)[y_m(t)-B^T \hat x_{1}(t)],  \\
    \hat y(t) & = B^T \hat{x}_{1}(t),
\end{aligned}
\label{eq:kalman}
\end{equation}
where $K(t)$ is the Kalman gain (e.g. \cite{astrom}). The minimum
possible variance of the estimation error, $M^*(t_m) = \min
\mathbf{E}(\hat y(t_m)- y_1(t_m))^2$ ($^*$ denotes optimal) can be
computed from the differential Riccati equation
\begin{align}
\dot X(t) & = J_{k_m} X(t)+X(t)J_{k_m}^T + 2k_mk_BT_mBB^T \nonumber \\
 & - \frac{k_m}{2k_BT_m}(X(t) - 2k_BT_m I_n)B \nonumber \\ & \qquad \qquad \times B^T (X(t)-2k_BT_mI_n)^T, \label{eq:ricc} \\
M^*(t_m) & = B^T X(t_m)B, \quad J_{k_m}:=J-k_mBB^T.  \nonumber
\end{align}
A series expansion $X(t)=\frac{1}{t}X_{-1}+X_0 + tX_1+\ldots$ of
the solution to (\ref{eq:ricc}) yields that the coefficient
$X_{-1}$ should satisfy $X_{-1}=\frac{k_m}{2k_BT_m}X_{-1}BB^T
X_{-1}$. Note that $X_{-1}$ is independent on $J_{k_m}$. From the
$X_1$ equation, we yield that
\begin{equation*}
M^*(t_m) = \frac{2k_BT_m}{k_m t_m} + O(1),
\end{equation*}
since $M^*(t)=\frac{1}{t}B^TX_{-1}B+ B^TX_0B + tB^TX_1B+\ldots$
Here the boundary condition $M^*(0)=+\infty$ has been used, since
it is assumed that $y_0$ is  completely unknown, see
Remark~\ref{rem:measass}. It is easy to verify that $M^*(t_m)
\rightarrow 0$ as $t_m \rightarrow \infty$, and given an
infinitely long measurement a perfect estimate is obtained. This
comes at the expense of a large back action.

To implement the Kalman filter (\ref{eq:kalman}) requires a
complete model ($J,B,k_m,T_m$) which is not always reasonable to
assume. Nevertheless, the Kalman filter is optimal and the
variance of the estimation error, $M(t):=\mathbf{E}e(t)^2$, of any
other estimator, in particular those that do not require complete
model knowledge, must satisfy
\begin{equation}
M(t_m)\geq M^*(t_m) = \frac{2k_BT_m}{k_m t_m} + O(1).
\label{eq:covest}
\end{equation}

\subsubsection{Back action and estimation error trade-off}
Define the root mean square back action and the root mean square
estimation error of the potential $y$ by
\begin{equation*}
  |\Delta y(t_m)| := \sqrt{B^T P(t_m) B}, \quad |\Delta \hat y(t_m)|:= \sqrt{M(t_m)}.
\end{equation*}
This is the typical magnitude of the change of the potential $y$
and the estimation error after a measurement. Using
(\ref{eq:covmeas}) and (\ref{eq:covest}), the appealing relation
\begin{equation}
  |\Delta y(t_m)| |\Delta \hat y(t_m)| \geq 2 k_BT_m / C + O(t_m),
\label{eq:hardlimit}
\end{equation}
where $1/C = B^TB$, is obtained. Hence, there is a direct
trade-off between the accuracy of estimation and the
perturbation in the potential, independently on (small) $t_m$
and admittance $k_m$. It is seen that the more ``capacitance"
$(C)$ $\mathcal{S}$ has, the less important the trade-off is.
One can interpret $C$ as a measure of the physical size or
inertia of the system. The trade-off is more important for
``small" system in ``hot" environments. Using an optimal filter,
the trade-off is satisfied with equality.

\subsection{Memoryless active measurement device}
\label{sec:actmeas} A problem with the device $\mathcal{M}_1$ is
that it causes back action $b$ even in the most ideal situation.
If active elements are allowed in the measurement device, this
perturbation can apparently be easily eliminated, but of course
with the inherent costs of an active device. Consider the
measurement device $\mathcal{M}_2$ to the right in
Fig.~\ref{fig:meascirc}. It is modeled by
\begin{equation*}
  \mathcal{M}_2:\left\{
\begin{aligned}
 u_m(t) & = k_my(t), \\
 u(t)  & =  u_m(t)-k_my(t) = 0, \\
 y_m(t) & = \frac{u_m(t)}{k_m} = y(t), \\
\end{aligned}
\right.
\end{equation*}
where an active element $-k_m$ exactly compensates for the back
action in $\mathcal{M}_1$. It is clear that there is no back
action and no estimation error using this device,
\begin{equation*}
  \label{eq:6}
  b(t_m)= 0, \quad e(t_m) = 0,
\end{equation*}
for all $t_m$. Next, a lossless approximation of $\mathcal{M}_2$
is performed.

\begin{table*}[!tb]
\caption{Summary of back action and estimation error after a
measurement in the time interval $[0,t_m]$. $b_d(t_m)$ -
deterministic back action, $P(t_m)$ - covariance of back action,
$|\Delta y|^2$ - variance of potential, and $M^*(t_m)$ - lower
bound on estimation error.} \centering
%\begin{tabular}{l|c p{3cm} c p{3cm} c p{3cm} c p{3cm}}
\makebox[5cm][c]{\begin{tabular}{l|cccc}
\hline
Device & $b_d(t_m)$ & $ P(t_m)=\mathbf{E}b_s(t_m)b_s(t_m)^T $ & $|\Delta y(t_m)|^2=B^TP(t_m)B$ & $M^*(t_m)=\min |\Delta \hat y|^2$ \\ \hline
$\mathcal{M}_1$ & $-k_my_0 Bt_m + O(t_m^2)$ & 0 &  0 & 0 \\
$\hat{\mathcal{M}}_{1}$ & $- k_m y_0B t_m + O(t_m^2)$ & $2 k_mk_BT_m BB^T t_m + O(t_m^2)$ & $\frac{2k_mk_B T_m }{C^2}t_m + O(t_m^2)$ & $\frac{2k_BT_m}{k_m }t_m^{-1} + O(1)$ \\ \hline
$\mathcal{M}_2$ & 0 & 0 & 0 & 0 \\
$\hat{\mathcal{M}}_{2}$ & $\frac{y_0^3 k_m}{4E_m}B t_m^2 + O(t_m^3)$ & $2k_mk_BT_m BB^Tt_m + O(t_m^2)$ & $\frac{2k_m k_BT_m}{C^2} t_m + O(t_m^2)$ &
$\frac{2k_BT_m}{k_m} t_m^{-1} +O(1)$
\end{tabular}}
\label{table:back1}
\end{table*}

\subsubsection{Lossless realization $\hat{\mathcal{M}}_2$}
Let the dissipative element $k_m$ in $\mathcal{M}_2$ be
implemented with a linear lossless system, see
Proposition~\ref{prop:johnsonnoise}, and the active element $-k_m$
be implemented using the nonlinear lossless system in
(\ref{eq:nonlin_approx_temp}). This approximation of
$\mathcal{M}_2$ captures the reasonable assumption that the
measurement device must be charged with energy to behave like an
active device, and that its linear dissipative element satisfies
the fluctuation-dissipation theorem.

Assume that the temperature of the measurement device
$\hat{\mathcal{M}}_2$ is $T_m$ and the deterministic part of the
total energy of the active element is $E_m$. Then the
interconnected system becomes
\begin{equation*}
 \mathcal{S} \hat{\mathcal{M}}_{2}:\left\{
\begin{aligned}
\dot x_{2}(t) & = (J-k_mBB^T)x_{2}(t) \\ &+ \frac{k_m}{\sqrt{2E_m}} x_r(t) BB^Tx_{2}(t) \\ & - B \sqrt{2k_mk_BT_m}n(t), \quad x_2(0)=x_0, \\
\dot x_r(t) & = \frac{k_m}{\sqrt{2E_m}} (B^T x_{2}(t))^2, \\
x_r(0) &= \sqrt{2E_m} + \Delta x_{r0}, \\ \mathbf{E}\Delta x_{r0} &=0, \quad \mathbf{E}\Delta x_{r0}^2 =k_BT_m, \\
y_m(t) & = \frac{u_m(t)}{k_m} = B^Tx_2(t) + \sqrt{\frac{2k_BT_m}{k_m}}n(t),
\end{aligned}
\right.
\end{equation*}
where $x_2$ is the state of $\mathcal{S}$, and $x_r$ is the state
of the active element. Using the closed-form solution
(\ref{eq:nonlinenoisedef0})--(\ref{eq:nonlinenoisedef}) to
eliminate $x_r$, we can also write the equations as
\begin{equation}
 \mathcal{S} \hat{\mathcal{M}}_{2}:\left\{
\begin{aligned}
\dot x_{2}(t) & = \left( J+\frac{k_m \Delta x_{r0}}{\sqrt{2E_m}}BB^T \right) x_{2}(t) \\ &  + Bw_d(t) - B \sqrt{2k_mk_BT_m}n(t), \,x_2(0)=x_0, \\
y_m(t) & = \frac{u_m(t)}{k_m} = B^Tx_2(t) + \sqrt{\frac{2k_BT_m}{k_m}}n(t),
\end{aligned}
\right.
\label{eq:ShatM2}
\end{equation}
with the deterministic perturbation $w_d(t)=\frac{k_m^2
y_0^3}{2E_m}t +O(t^2)$. The solution to (\ref{eq:ShatM2}) can be
expanded as
\begin{multline} x_{2}(t) = x_0 - \sqrt{2k_mk_BT_m} B N(t) \\ + \left( J +
\frac{k_m \Delta x_{r0}}{\sqrt{2E_m}}BB^T \right) x_0 t
\\ - \sqrt{2k_mk_BT_m} \left( J+\frac{k_m \Delta
x_{r0}}{\sqrt{2E_m}}BB^T\right)\\
\times B \int_0^t N(s)ds + B\frac{k_m^2 y_0^3}{4E_m}t^2 + o(t^2),
\end{multline}
where $N(t)=\int_0^t n(s)ds = O(\sqrt{t})$ is integrated white
noise (a Brownian motion). It can be seen that the white noise
disturbance $n$ is much more important than the deterministic
disturbance $w_d$.
 The back action becomes
\begin{align*}
b(t_m) & =  x_2(t_m) - x(t_m)=b_d(t_m)+b_s(t_m) \\
b_{d}(t_m) & := \mathbf{E} x_{2}(t_m)-x(t_m)  = \frac{k_m^2 y_0^3}{4E_m}Bt_m^2 + O(t_m^3), \\
 b_{s}(t_m)  & := x_{2}(t_m) - \mathbf{E} x_{2}(t_m) \\ & = -\sqrt{2k_mk_BT_m} B N(t_m) + \frac{k_m
\Delta x_{r0}}{\sqrt{2E_m}}By_0 t_m \\ & \qquad \qquad + O(t_m\sqrt{t_m}),
\end{align*}
where we used that the covariance between $\Delta x_{r0}$ and $N$
is zero. The covariance of the back action becomes
\begin{equation}
P(t_m) := \mathbf{E}b_{s}(t_m)b_{s}(t_m)^T =  2k_mk_BT_m BB^Tt_m + O(t_m^2).
\end{equation}
It is seen that the dominant term in the stochastic back action is
the same as for $\hat{\mathcal{M}}_1$, but the deterministic back
action $b_d$ is much smaller.

\begin{remark} Using a
nonlinear lossless approximation of $-k_m$ of order larger than
one, we can make the deterministic back action smaller for fixed
$E_m$, at the expense of model complexity.
\end{remark}

The measurement noise in $\mathcal{S} \hat{\mathcal{M}}_2$ is the
same as in $\mathcal{S} \hat{\mathcal{M}}_1$, and we can
essentially repeat the argument from Section~\ref{sec:lossessM1}.
The difference between $\mathcal{S} \hat{\mathcal{M}}_2$ and
$\mathcal{S} \hat{\mathcal{M}}_1$ lies in the dynamics. In
$\mathcal{S} \hat{\mathcal{M}}_2$, the system matrix is
$J+\frac{k_m \Delta x_{r0}}{\sqrt{2E_m}}BB^T$ and there is a
deterministic perturbation $w_d(t)$. To make an estimate $\hat
y(t_m)$, knowledge of $y_m(t)$ in the interval $[0,t_m]$ is
assumed. If we assume that the model $(J,B,k_m,T_m)$ is known plus
that the observer somehow knows $w_d(t)$ and $\Delta x_{r0}$, then
the optimal estimate again has the error covariance $M^*(t_m) =
\frac{2k_BT_m}{k_m t_m} + O(1)$. Any other estimator that has less
information available must be worse, so that
\begin{equation*}
M(t) \geq M^*(t_m) = \frac{2k_BT_m}{k_m t_m} + O(1).
\end{equation*}
Again, we have the trade-off (\ref{eq:hardlimit})
\begin{equation*}
  |\Delta y(t_m)| |\Delta \hat y(t_m)| \geq 2 k_BT_m / C + O(t_m),
\end{equation*}
which holds even though we have inserted an active element in
device. The only effect of the active element is to eliminate the
deterministic back action.

\subsection{Summary and Discussion}
The back action and estimation error of the measurement devices
are summarized in Table~\ref{table:back1}. For the ideal devices
$\mathcal{M}_1$ and $\mathcal{M}_2$ no real trade-offs exist.
However, if we realize them with lossless elements very reasonable
trade-offs appear. It is only in the limit of infinite available
energy and zero temperature that the trade-offs disappear. The
deterministic back action can be made small with large $E_m$,
charging the measurement device with much energy. However, the
effect of stochastic back action is inescapable for both
$\hat{\mathcal{M}}_1$ and $\hat{\mathcal{M}}_2$, and the trade-off
\begin{equation}
  |\Delta y||\Delta \hat y| \geq 2 k_BT_m/C \quad \text{for small } t_m,
\label{eq:hardlimit2}
\end{equation}
holds in both cases. The reason for having short measurements is
to minimize the effect of the back action. The lower bound on the
estimation error $M^*(t_m)$ tends to zero for large $t_m$, but at
the same time the measured system $\mathcal{S}$ tends to a
thermodynamic equilibrium with the measurement device.

It is possible to increase the estimation accuracy by making the
admittance $k_m$ of the measurement device large, but only at the
expense of making a large stochastic perturbation of the measured
system. Hence, we have quantified a limit for the observer effect
discussed in the introduction of this section. We conjecture that
inequalities like (\ref{eq:hardlimit2}) hold for very general
measurement devices as soon as the dissipative elements satisfy
the fluctuation-dissipation theorem.  Note, for example, that if a
lossless transmission cable of admittance $k_m$ and of temperature
$T_m$ is used to interconnect the system $\mathcal{S}$ to an
arbitrary measurement device $\mathcal{M}$, then the trade-off
(\ref{eq:hardlimit2}) holds. The deterministic back action, on the
other hand, is possible to make smaller by using more elaborate
nonlinear lossless implementations.

\section{Conclusions}
In this paper, we constructed lossless approximations of both
dissipative and active systems.  We obtained an if-and-only-if
characterization of linear dissipative systems (linear lossless
systems are dense in the linear dissipative systems) and gave
explicit approximation error bounds that depend on the time
horizon, the order, and the available energy of the
approximations. We showed that the fluctuation-dissipation
theorem, that quantifies macroscopic thermal noise, can be
explained by uncertainty in the initial state of a linear lossless
approximation of very high order. We also saw that using these
techniques, it was relatively easy to quantify limitations on the
back action of measurement devices. This gave rise to a trade-off
between process and measurement noise.

\section{Appendices}
\subsection{Proof of Theorem~\ref{thm:diss2}}
\label{sec:proofdiss2} We first show the 'only if' direction.
Assume the opposite: There is a lossless approximation $G_N$ that
satisfy (\ref{eq:lossapprox}) for arbitrarily small $\epsilon>0$
even though $G$ is not dissipative. From
Proposition~\ref{prop:initial} it is seen that we can without loss
of generality assume $G_N$ has a minimal realization and $x_0=0$.
If $G$ is not dissipative, we can find an input $u(t)$ over the
interval $[0,\tau]$ such that $\int_0^\tau y(t)^Tu(t)dt = -K_1 <
0$, i.e., we extract energy from $G$ even though its initial state
is zero. Call $\|u\|_{L_1[0,\tau]}=K_2$. We have $\int_0^\tau
(y_N(t)-y(t))^Tu(t)dt \leq  \epsilon K_2$, by the assumption that
a lossless approximation $G_N$ exists and using the Cauchy-Schwarz
inequality. But the lossless approximation satisfies $\int_0^\tau
y_N(t)^Tu(t)dt = \frac{1}{2}x(\tau)^Tx(\tau)$, since $x_0=0$.
Hence, $-\int_0^\tau y(t)^Tu(t)dt=K_1 \leq \epsilon K_2-
\frac{1}{2}x_{N}(\tau)^Tx_{N}(\tau)  \leq \epsilon K_2$. But since
$\epsilon$ can be made arbitrarily small, this leads to a
contradiction.

To prove the 'if' direction we explicitly construct a $G_N$ that
satisfies (\ref{eq:lossapprox}), when $G$ is dissipative. It turns
out that we can fix the model parameters $D=0$ in $G_N$.
Furthermore, we must choose $x_0=0$ since otherwise the zero
trajectory $y=0$ cannot be tracked (see above). We thus need to
construct a lossless system with impulse response $g_{N}(t)$ such
that $\|g-g_N\|_{L_2[0,\tau_0]}\leq \epsilon$, where we have
denoted the time interval given in the theorem statement by
$[0,\tau_0]$. Note that we can increase this time interval without
loss of generality, since if we prove $\|g-g_N\|_{L_2[0,\tau]}
\leq \epsilon$ then $\|g-g_N\|_{L_2[0,\tau_0]}\leq \epsilon$, if
$\tau \geq \tau_0$.

Let us define the constants
\begin{align*}
 C_1 & \geq \|g(t)\|_2, \quad t\geq 0; \quad &  C_2 &= \int_0^{\infty}\|\dot g(t)\|_1 dt; \\
 C_3& =\int_0^\infty \|g(t)\|_1 dt; \quad & C&=\frac{4C_1+2C_2}{\pi}+\frac{4C_3}{\tau_0},
\end{align*}
which are all finite by the assumptions of the theorem. It will
become clear later why the constants are defined this way.

Next let us fix the approximation time interval $[0,\tau]$ such
that
\begin{equation}
\delta(\tau):= \int_\tau^{\infty} \|g(t)\|_1 dt \leq \frac{\epsilon^2}{2C\sqrt{p}},
\label{eq:taufixed}
\end{equation}
where $\tau\geq \tau_0$. Such a $\tau$ always exists since
$\delta(\tau)$ is a continuously decreasing function that
converges to zero. The lossless approximation is achieved by
truncating a Fourier series keeping $N$ terms. Let us choose the
integer $N$ such that
\begin{equation}
 N \leq \frac{\tau C^2}{\epsilon^2} \leq N+1,
\label{eq:Nfixed}
\end{equation}
where $\tau$ is fixed in (\ref{eq:taufixed}). We proceed by
constructing an appropriate Fourier series.

\subsubsection{Fourier expansion} The extended function $\tilde
g(t)\in L_2(-\infty,\infty)$ of $g(t)$ is given by
\begin{equation*}
\tilde g(t) =
\left\{
\begin{aligned}
g(t), & \quad  t \geq 0, \\
g(-t)^T, & \quad  t < 0.
\end{aligned}
\right.
\end{equation*}
Let us make a Fourier expansion of $\tilde g(t)$ on the interval
$[-\tau,\tau]$,
\begin{equation*}
\tilde g_{\tau}(t) := \frac{1}{2}A_0 + \sum_{k=1}^{\infty} A_k \cos \frac{k\pi t}{\tau} + B_k \sin  \frac{k\pi t}{\tau},
\end{equation*}
with convergence in $L_2[-\tau,\tau]$. For the restriction to
$[0,\tau]$ it holds that $\|g-\tilde g_{\tau}\|_{L_2[0,\tau]}=0$.
The expressions for the (matrix) Fourier coefficients are
\begin{equation}
\begin{aligned}
A_k & = \frac{1}{\tau} \int_{0}^{\tau} (g(t)+g(t)^T) \cos \frac{k\pi t}{\tau} dt\\
B_k & = \frac{1}{\tau} \int_{0}^{\tau} (g(t)-g(t)^T) \sin \frac{k\pi t}{\tau} dt.
\end{aligned}
\label{eq:fouriercoeffs}
\end{equation}
Note that $A_k,B_k  \in\mathbb{R}^{p\times p}$, and $A_k$ are
symmetric ($A_k=A_k^T$) and $B_k$ are anti-symmetric
($B_k=-B_k^T$). Parseval's formula becomes
\begin{multline}
\|\tilde g_{\tau}\|_{L_2[0,\tau]}^2  = \int_0^{\tau} \text{Tr
}g(t)g(t)^T dt \\  = \frac{\tau}{4}\text{Tr }A_0^TA_0 +
\frac{\tau}{2} \sum_{k=1}^\infty \text{Tr }A_k^TA_k + \text{Tr
}B_k^TB_k. \label{eq:parseval}
\end{multline}

We also need to bound $\|A_k-jB_k\|_2^2 = \text{Tr }A_k^TA_k +
\text{Tr }B_k^TB_k$. It holds
\begin{align*}
A_k-jB_k & = \frac{1}{\tau} \int_{-\tau}^{\tau} \tilde g(t) e^{-j\pi k t/\tau}dt \\
& = \frac{(-1)^k}{jk\pi}(g(\tau)^T - g(\tau) ) + \frac{1}{jk\pi} (g(0)-g(0)^T) \\
& \quad \quad + \frac{1}{jk\pi} \int_0^\tau e^{-j\pi k t/\tau}\dot g(t) - e^{j\pi k t/\tau}\dot g(t)^T dt,
\end{align*}
using integration by parts.  Then
\begin{multline*}
\|A_k-jB_k \|_2   \\
\leq \frac{4C_1}{k\pi} +\frac{1}{k\pi} \left|\!\left|\int_0^\tau
e^{-j\pi k t/\tau}\dot g(t) - e^{j\pi k t/\tau}\dot g(t)^T dt
\right|\!\right|_2
 \\ \leq \frac{4C_1}{k\pi} + \frac{2}{k\pi} \int_0^\tau \|\dot{g}(t)\|_1 dt  \leq \frac{1}{k} \frac{4C_1+2C_2}{\pi}.
\end{multline*}
Furthermore,
\begin{equation*}
\|A_k-jB_k\|_2  = \left|\!\left|\frac{1}{\tau} \int_{-\tau}^{\tau} \tilde g(t) e^{-j\pi k t/\tau}dt \right|\!\right|_2 \leq \frac{2C_3}{\tau_0},
\end{equation*}
since $\tau \geq \tau_0$. If the former bound is multiplied by $k$
and the latter is multiplied by two and they are added together,
we obtain
\begin{equation} \|A_k-jB_k\|_2  \leq \frac{C}{2+k},
\quad k\geq 0, \label{eq:AkBkbound}
\end{equation}
where $C$ was defined above.

\subsubsection{Lossless approximation $G_N$} Let us now truncate
the series $\tilde g_\tau(t)$ and keep the terms with Fourier
coefficients $A_0,\ldots,A_{N-1}$ and $B_1,\ldots,B_{N-1}$. The
truncated impulse response can be realized exactly by a
finite-dimensional lossless system iff $A_0\geq 0$ and
$A_k-jB_k\geq 0$, $k=1,\ldots,N-1$, see
\cite[Theorem~5]{willems72B}. But these inequalities are not
necessarily true. We will thus perturb the coefficients to ensure
the system becomes lossless and yet ensure that the
$L_2$-approximation error is less than $\epsilon$.

We quantify a number $\xi \geq 0$ that ensures that $A_k-jB_k +
\xi I_p \geq 0$ for all $k$. Note that by the assumption of $G$
being dissipative, it holds that
\begin{equation*}
\hat{g}(j\omega )+\hat g(-j\omega )^T = \int_{-\infty}^{\infty} \tilde g(t) e^{-j\omega t}dt \geq 0.
\end{equation*}
Remember that $\int_{-\tau}^{\tau} \tilde g(t) e^{-j\pi k
t/\tau}dt = \tau A_k - j \tau B_k$, and therefore
\begin{equation*}
A_k -j B_k + \Delta_k \geq 0
\end{equation*}
where $\Delta_k := \frac{1}{\tau}\int_\tau^\infty g(t)e^{-j\pi k
t/\tau} + g(t)^T e^{j\pi k t/\tau}dt$. The size of $\Delta_k$ can
be bounded and we have
\begin{align*}
\|\Delta_k \|_2  = \sqrt{\text{Tr }\Delta_k^*\Delta_k}
\leq  \frac{2}{\tau}\int_\tau^{\infty} \|g(t)\|_1 dt \leq \frac{\epsilon^2}{\tau C\sqrt{p}},
\end{align*}
using (\ref{eq:taufixed}).  Thus we can choose
\begin{equation*}
\xi = \frac{\epsilon^2}{\tau C\sqrt{p}},
\end{equation*}
and $A_k-jB_k + \xi I_p \geq 0$ for all $k$, since $\rho(\Delta_k)
\leq \|\Delta_k\|_2$.

Next we verify that a system $G_N$ with impulse response
\begin{multline}
g_{N}(t) := \frac{1}{2}(A_0+\xi I_p) \\ + \sum_{k=1}^{N-1}
(A_k+\xi I_p) \cos \frac{k\pi t}{\tau} + B_k \sin  \frac{k\pi
t}{\tau}, \label{eq:lossapproxgen}
\end{multline}
where $\tau$, $N$, $\xi$ are fixed above satisfies the statement
of the theorem. By the construction of $\xi$, $G_N$ is lossless.
It remains to show that the approximation error
$\|g-g_N\|_{L_2[0,\tau]}$ is less than $\epsilon$. Using
Parseval's formula (\ref{eq:parseval}), it holds
\begin{multline*}
\|g-g_N\|_{L_2[0,\tau]}^2 = \|\tilde g_\tau -g_N\|_{L_2[0,\tau]}^2 \\
=\left|\!\left|\frac{1}{2}\xi I + \sum_{k=1}^{N-1} \xi I \cos
\frac{k\pi t}{\tau} +\sum_{k=N}^{\infty} A_k \cos \frac{k\pi t}{\tau} + B_k \sin  \frac{k\pi t}{\tau}\right|\!\right|_2^2\\
\leq \frac{\tau}{2} N\xi^2 p + \frac{\tau}{2} \sum_{k=N}^\infty
\|A_k-jB_k\|_2^2
 \leq \frac{\tau}{2} N\xi^2 p + \frac{\tau}{2}\sum_{k=N}^{\infty}
 \frac{C^2}{(2+k)^2} \\
 \leq \frac{\tau}{2} \frac{\tau C^2}{\epsilon^2} \frac{\epsilon^4}{\tau^2 C^2 p} p + \frac{\tau}{2}\frac{C^2}{N+1}
\leq \frac{\epsilon^2}{2} + \frac{\tau}{2}\frac{C^2
\epsilon^2}{\tau C^2} = \epsilon^2,
\end{multline*}
where the bounds (\ref{eq:Nfixed}) and (\ref{eq:AkBkbound}) are
used. The result has been proved.

\subsection{Proof of Theorem~\ref{thm:timerev}}
\label{sec:prooftimerev} We first show the 'if' direction. Then
there exists a lossless and time-reversible (with respect to
$\Sigma_e$, see Definition~\ref{def:reversible}) approximation
$G_N$ of $G$. Theorem~\ref{thm:diss2} shows that $G$ is
dissipative. Theorem~8 in \cite{willems72B} shows that $G_N$
necessarily is reciprocal with respect to $\Sigma_e$. Since $G_N$
is an arbitrarily good approximation it follows that $G$ also is
reciprocal, which concludes the 'if' direction of the proof.

Next we show the 'only if' direction. Then $G$ is dissipative and
reciprocal with respect to $\Sigma_e$. Theorem~\ref{thm:diss2}
shows that there exists an arbitrarily good lossless approximation
$G_N$, and we will use the approximation (\ref{eq:lossapproxgen}).
That $G$ is reciprocal with respect to $\Sigma_e$ means that
$\Sigma_e g(t) = g(t)^T\Sigma_e$, see
Definition~\ref{def:reciprocal}. Using this and the definition of
$A_k$ and $B_k$ in (\ref{eq:fouriercoeffs}), it is seen that
\begin{equation*}
\Sigma_e(A_k + \xi I_p )  = (A_k + \xi I_p )^T \Sigma_e, \quad
\Sigma_e B_k = B_k ^T \Sigma_e.
\end{equation*}
Thus the chosen $G_N$ is also reciprocal, $\Sigma_e
g_N(t)=g_N(t)^T\Sigma_e$, and Theorem~8 in \cite{willems72B} shows
$G_N$ is time reversible with respect to $\Sigma_e$. This
concludes the proof.

%-------------------------------------
\section*{Acknowledgment} The authors would like to thank Dr.
B.~Recht for helpful suggestions and comments on an early version
of the paper, and Prof.~J.~C.~Willems for helpful discussions.
%-------------------------------------
% Generated by IEEEtran.bst, version: 1.13 (2008/09/30)

%-------------------------------------


\begin{thebibliography}{10}
\providecommand{\url}[1]{#1}
\csname url@samestyle\endcsname
\providecommand{\newblock}{\relax}
\providecommand{\bibinfo}[2]{#2}
\providecommand{\BIBentrySTDinterwordspacing}{\spaceskip=0pt\relax}
\providecommand{\BIBentryALTinterwordstretchfactor}{4}
\providecommand{\BIBentryALTinterwordspacing}{\spaceskip=\fontdimen2\font plus
\BIBentryALTinterwordstretchfactor\fontdimen3\font minus
  \fontdimen4\font\relax}
\providecommand{\BIBforeignlanguage}[2]{{%
\expandafter\ifx\csname l@#1\endcsname\relax
\typeout{** WARNING: IEEEtran.bst: No hyphenation pattern has been}%
\typeout{** loaded for the language `#1'. Using the pattern for}%
\typeout{** the default language instead.}%
\else
\language=\csname l@#1\endcsname
\fi
#2}}
\providecommand{\BIBdecl}{\relax}
\BIBdecl

\bibitem{Seron+97}
M.~M. Seron, J.~H. Braslavsky, and G.~C. Goodwin, \emph{Fundamental Limitations
  in Filtering and Control}.\hskip 1em plus 0.5em minus 0.4em\relax London:
  Springer, 1997.

\bibitem{murray+03}
R.~Murray, Ed., \emph{Control in an Information Rich World - Report of the
  Panel on Future Directions in Control, Dynamics, and Systems}.\hskip 1em plus
  0.5em minus 0.4em\relax Philadelphia: Society for Industrial and Applied
  Mathematics, 2003.

\bibitem{nair+04}
G.~N. Nair and R.~J. Evans, ``Stabilizability of stochastic linear systems with
  finite feedback data rates,'' \emph{SIAM Journal on Control and
  Optimization}, vol.~43, no.~2, pp. 413--436, July 2004.

\bibitem{Tatikonda+04}
S.~Tatikonda, A.~Sahai, and S.~K. Mitter, ``Stochastic linear control over a
  communication channel,'' \emph{IEEE Transactions on Automatic Control},
  vol.~49, no.~9, pp. 1549--1561, September 2004.

\bibitem{martins+07}
N.~C. Martins, M.~A. Dahleh, and J.~C. Doyle, ``Fundamental limitations of
  disturbance attenuation in the presence of side information,'' \emph{IEEE
  Transactions on Automatic Control}, vol.~52, no.~1, pp. 56--66, January 2007.

\bibitem{wannier}
G.~H. Wannier, \emph{Statistical Physics}.\hskip 1em plus 0.5em minus
  0.4em\relax Dover Publications, 1987.

\bibitem{kittel+80}
C.~Kittel and H.~Kroemer, \emph{Thermal Physics}.\hskip 1em plus 0.5em minus
  0.4em\relax W. H. Freeman and Company, 1980.

\bibitem{ma85}
S.-K. Ma, \emph{Statistical Mechanics}.\hskip 1em plus 0.5em minus 0.4em\relax
  World Scientific, 1985.

\bibitem{VinnicombePaulsson}
I.~Lestas, J.~Paulsson, N.~Ross, and G.~Vinnicombe, ``Noise in gene regulatory
  networks,'' \emph{IEEE Transactions on Automatic Control}, vol.~53, pp.
  189--200, January 2008, (joint issue with IEEE Transactions on Circuits and
  Systems I).

\bibitem{zwanzig73}
R.~Zwanzig, ``Nonlinear generalized {Langevin} equations,'' \emph{Journal of
  Statistical Physics}, vol.~9, no.~3, pp. 215--220, 1973.

\bibitem{ford+kac86}
G.~Ford and M.~Kac, ``On the quantum {Langevin} equation,'' \emph{Journal of
  Statistical Physics}, vol.~46, no. 5/6, pp. 803--810, 1987.

\bibitem{caldeira+83}
A.~Caldeira and A.~Leggett, ``Path integral approach to quantum {Brownian}
  motion,'' \emph{Physica}, vol. 121A, pp. 587--616, 1983.

\bibitem{Lebowitz99}
J.~L. Lebowitz, ``Microscopic origins of irreversible macroscopic behavior,''
  \emph{Physica A}, vol. 263, pp. 516--527, 1999.

\bibitem{johnson28}
J.~B. Johnson, ``Thermal agitation of electricity in conductors,''
  \emph{Physical Review}, vol.~32, pp. 97--109, 1928.

\bibitem{nyquist28}
H.~Nyquist, ``Thermal agitation of electrical charge in conductors,''
  \emph{Physical Review}, vol.~32, pp. 110--113, 1928.

\bibitem{cellen+51}
H.~B. Callen and T.~A. Welton, ``Irreversibility and generalized noise,''
  \emph{Physical Review}, vol.~83, no.~1, pp. 34--40, 1951.

\bibitem{Kubo}
R.~Kubo, ``The fluctuation-dissipation theorem,'' \emph{Reports on Progress in
  Physics}, vol.~29, no.~1, pp. 255--284, 1966.

\bibitem{twiss55}
R.~Q. Twiss, ``Nyquist's and {Thevenin's} theorems generalized for
  nonreciprocal linear networks,'' \emph{Journal of Applied Physics}, vol.~26,
  no.~5, pp. 599--602, 1955.

\bibitem{anderson82}
B.~D.~O. Anderson, ``Port properties of nonlinear reciprocal networks,''
  \emph{Circuits, Systems Signal Processing}, vol.~1, no.~1, pp. 77--92, 1982.

\bibitem{marconi+08}
U.~M.~B. Marconi, A.~Puglisi, L.~Rondoni, and A.~Vulpiani,
  ``Fluctuation-dissipation: Response theory in statistical physics,''
  \emph{Physics Reports}, vol. 461, pp. 111--195, 2008.

\bibitem{evans+02}
D.~J. Evans and D.~J. Searles, ``The fluctuation theorem,'' \emph{Advances in
  Physics}, vol.~51, no.~7, pp. 1529--1585, 2002.

\bibitem{lebowitz+99}
J.~L. Lebowitz and H.~Spohn, ``A {Gallavotti-Cohen} type symmetry in the large
  deviation functional for stochastic dynamics,'' \emph{Journal of Statistical
  Physics}, vol.~95, pp. 333--365, 1999.

\bibitem{Brockett+78}
R.~W. Brockett and J.~C. Willems, ``Stochastic control and the second law of
  thermodynamics,'' in \emph{Proceedings of the IEEE Conference on Decision and
  Control}, San Diego, California, 1978, pp. 1007--1011.

\bibitem{Brockett99}
R.~W. Brockett, ``Control of stochastic ensembles,'' in \emph{The {\AA strom}
  Symposium on Control}.\hskip 1em plus 0.5em minus 0.4em\relax Lund, Sweden:
  Studentlitteratur, 1999, pp. 199--215.

\bibitem{san07a}
J.-C. Delvenne, H.~Sandberg, and J.~C. Doyle, ``Thermodynamics of linear
  systems,'' in \emph{Proceedings of the European Control Conference}, Kos,
  Greece, July 2007.

\bibitem{Haddad+05}
W.~M. Haddad, V.~S. Chellaboina, and S.~G. Nersesov, \emph{Thermodynamics: A
  Dynamical Systems Approach}.\hskip 1em plus 0.5em minus 0.4em\relax Princeton
  University Press, 2005.

\bibitem{willems72A}
J.~C. Willems, ``Dissipative dynamical systems part {I}: {General} theory,''
  \emph{Archive for Rational Mechanics and Analysis}, vol.~45, pp. 321--351,
  1972.

\bibitem{willems72B}
------, ``Dissipative dynamical systems part {II}: {Linear} systems with
  quadratic supply rates,'' \emph{Archive for Rational Mechanics and Analysis},
  vol.~45, pp. 352--393, 1972.

\bibitem{Mitter+05}
S.~K. Mitter and N.~J. Newton, ``Information and entropy flow in the
  {Kalman-Bucy} filter,'' \emph{Journal of Statistical Physics}, vol. 118, pp.
  145--176, 2005.

\bibitem{san07d}
H.~Sandberg, J.-C. Delvenne, and J.~C. Doyle, ``Linear-quadratic-gaussian heat
  engines,'' in \emph{Proceedings of the 46th IEEE Conference on Decision and
  Control}, New Orleans, Louisiana, Dec. 2007, pp. 3102--3107.

\bibitem{Bernstein+02}
D.~S. Bernstein and S.~P. Bhat, ``Energy equipartition and the emergence of
  damping in lossless systems,'' in \emph{Proceedings of the 41st IEEE
  Conference on Decision and Control}, Las Vegas, Nevada, 2002, pp. 2913--2918.

\bibitem{barahona+02}
M.~Barahona, A.~C. Doherty, M.~Sznaier, H.~Mabuchi, and J.~C. Doyle, ``Finite
  horizon model reduction and the appearance of dissipation in {Hamiltonian}
  systems,'' in \emph{Proceedings of the 41st IEEE Conference on Decision and
  Control}, vol.~4, 2002, pp. 4563--4568.

\bibitem{san07c}
H.~Sandberg, J.-C. Delvenne, and J.~C. Doyle, ``The statistical mechanics of
  fluctuation-dissipation and measurement back action,'' in \emph{Proceedings
  of the 2007 American Control Conference}, New York City, New York, July 2007,
  pp. 1033--1038.

\bibitem{georgiou+08}
T.~T. Georgiou and M.~C. Smith, ``Feedback control and the arrow of time,'' in
  \emph{Proceedings of the 47th IEEE Conference on Decision and Control},
  Cancun, Mexico, December 2008, pp. 2214--2219.

\bibitem{polderman+97}
J.~W. Polderman and J.~C. Willems, \emph{Introduction to Mathematical Systems
  Theory --- A Behavioral Approach}.\hskip 1em plus 0.5em minus 0.4em\relax
  Springer, 1997.

\bibitem{Cervera+07}
J.~Cervera, A.~J. {van der Schaft}, and A.~Ba{\~{n}}os, ``Interconnection of
  port-{Hamiltonian} systems and composition of {Dirac} structures,''
  \emph{Automatica}, vol.~43, pp. 212--225, 2007.

\bibitem{AndersonVong}
B.~D.~O. Anderson and S.~Vongpanitlerd, \emph{Network Analysis and Synthesis: A
  Modern Systems Theory Approach}.\hskip 1em plus 0.5em minus 0.4em\relax Dover
  Publications, 2006.

\bibitem{cheng}
D.~K. Cheng, \emph{Field and Wave Electromagnetics}, 2nd~ed.\hskip 1em plus
  0.5em minus 0.4em\relax Addison-Wesley, 1989.

\bibitem{Khalil}
H.~K. Khalil, \emph{Nonlinear systems}, 3rd~ed.\hskip 1em plus 0.5em minus
  0.4em\relax Upper Saddle River, New Jersey: Prentice Hall, 2002.

\bibitem{Jaynes57A}
E.~T. Jaynes, ``Information theory and statistical mechanics i,''
  \emph{Physical Review}, vol. 106, pp. 620--630, 1957.

\bibitem{Jaynes57B}
------, ``Information theory and statistical mechanics ii,'' \emph{Physical
  Review}, vol. 108, pp. 171--190, 1957.

\bibitem{barnes+34}
R.~B. Barnes and S.~Silverman, ``Brownian motion as a natural limit to all
  measuring processes,'' \emph{Reviews of Modern Physics}, vol.~6, pp.
  162--193, 1934.

\bibitem{mccombie53}
C.~McCombie, ``Fluctuation theory in physical measurements,'' \emph{Reports on
  Progress in Physics}, vol.~16, pp. 266--320, 1953.

\bibitem{astrom}
K.~J. {\AA}str{\"o}m, \emph{Introduction to Stochastic Control Theory}.\hskip
  1em plus 0.5em minus 0.4em\relax Dover Publications, 2006.

\end{thebibliography}
\end{document}